\newtheorem{theorem}{Theorem}
\newtheorem{lemma}{Lemma}
\newtheorem{observation}{Observation}
\newtheorem{conjecture}{Conjecture}
\newenvironment{dedication}
{\begin{quotation}\begin{center}\begin{em}}
{\end{em}\end{center}\end{quotation}}
\begin{document}

\title{Switching 3-Edge-Colorings of Cubic Graphs}

\author{
Jan Goedgebeur\\
\small Department of Computer Science\\[-0.8ex]
\small KU Leuven campus Kulak\\[-0.8ex]
\small 8500 Kortrijk, Belgium\\[-0.8ex]
\small and\\[-0.8ex]
\small Department of Applied Mathematics, Computer Science and Statistics\\[-0.8ex]
\small Ghent University\\[-0.8ex]
\small 9000 Ghent, Belgium\\[-0.8ex]
\small\tt jan.goedgebeur@ugent.be\\
\\
Patric R. J. \"Osterg\aa rd\\
\small Department of Communications and Networking\\[-0.8ex]
\small Aalto University School of Electrical Engineering\\[-0.8ex]
\small P.O.\ Box 15400, 00076 Aalto, Finland\\[-0.8ex]
\small\tt patric.ostergard@aalto.fi
}

\date{}

\maketitle

\begin{dedication}
In loving memory of Johan Robaey
\end{dedication}

\begin{abstract}
  The chromatic index of a cubic graph is either 3 or 4.
  Edge-Kempe switching, which can be used to transform
  edge-colorings, is here considered for 3-edge-colorings
  of cubic graphs. 
  Computational
  results for edge-Kempe switching of cubic graphs up to order 30 and 
  bipartite cubic graphs up to order 36 are tabulated. Families of 
  cubic graphs of orders $4n+2$ and $4n+4$ with $2^n$ edge-Kempe
  equivalence classes are presented; it is conjectured that
  there are no cubic graphs with more edge-Kempe equivalence classes.
  New families of nonplanar bipartite cubic graphs with exactly
  one edge-Kempe equivalence class are also obtained.
  Edge-Kempe switching is further connected to cycle switching of
  Steiner triple systems, for which an improvement of the established 
  classification algorithm is presented.

\end{abstract}

\noindent
    {\bf Keywords:} chromatic index, cubic graph, edge-coloring, 
    edge-Kempe switching,
    one-factorization, Steiner triple system.

 
\section{Introduction}

We consider simple finite undirected graphs without loops.
For such a graph $G = (V,E)$, the number of vertices $|V|$ is the
\emph{order} of $G$ and the number
of edges $|E|$ is the \emph{size} of $G$. If all vertices of
$G$ have the same number of neighbors, then $G$ is said to be \emph{regular}
and the number of neighbors is called the \emph{degree}. A graph that is regular
with degree $k$ is called $k$-\emph{regular}. The connected components
of 1-regular and 2-regular graphs consist of edges and cycles, respectively.
The smallest degree of regular graphs that cannot be easily characterized is 3.
A 3-regular graph is called \emph{cubic}. A regular graph with odd degree
necessarily has an even order.

A subgraph $G' = (V,E')$ of $G = (V,E)$ is said to be \emph{spanning} if each vertex
in $V$ is the endpoint of some edge in $E'$. A $t$-regular spanning subgraph
is called a $t$-\emph{factor}. A connected 2-regular spanning subgraph is a
\emph{Hamiltonian cycle}.
A \emph{decomposition} of a graph $G = (V,E)$ is a
set of subgraphs of $G$ whose edge sets partition $E$. A decomposition of a
regular graph into $t$-factors is called a $t$-\emph{factorization}.
A 1-factor is also known as a \emph{perfect matching}.

A $k$-\emph{edge-coloring} is a partition of the edges of a
graph into $k$ (color) classes so that no adjacent edges
are in the same class. Notice that we do \emph{not} label the
color classes in this work. Each edge-coloring with $k$
unlabeled colors and no empty color classes corresponds to $k!$ 
edge-colorings with labeled colors. This must be taken into account, for
example, when comparing counts from different studies---for 
example, our work and \cite{BH3}.

The smallest possible number of colors
in an edge-coloring is the \emph{chromatic index} of the graph.
No two edges of a \mbox{1-factor} are adjacent, and a decomposition
of a $k$-regular graph into $k$ \mbox{1-factors} is equivalent to
a $k$-edge-coloring of such a graph. Indeed, the terms 3-edge-coloring
and 1-factorization of a cubic graph are used interchangeably in the
current study.

As the edges of a $k$-regular graph cannot be
properly colored with fewer than $k$ colors, the chromatic index
of a $k$-edge-colorable $k$-regular graph is $k$.

The complement of a 1-factor of a cubic graph is a 2-factor. In
general, such a 2-factor may have cycles of arbitrary lengths,
but two cases of special interest here are when all cycles have
even length and when there is exactly one (that is, Hamiltonian)
cycle. The first case is related to edge-coloring.

\begin{theorem}
\label{thm:corr}
A cubic graph has a\/ $3$-edge-coloring iff it has a\/ $2$-factor
with cycles of even length.
\end{theorem}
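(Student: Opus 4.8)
The plan is to exploit the interchangeability, noted above, of 3-edge-colorings and 1-factorizations of a cubic graph, and to reduce everything to the elementary fact that a cycle admits a proper 2-edge-coloring precisely when its length is even. Throughout, I would keep in mind that removing a 1-factor from a cubic graph leaves a 2-regular spanning subgraph, that is, a 2-factor, and conversely that removing a 2-factor leaves a 1-factor.

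For the forward direction, I would start from a 3-edge-coloring, regarded as a partition of $E$ into three 1-factors $M_1, M_2, M_3$. Fix one class, say $M_1$, and consider the 2-factor $F = M_2 \cup M_3$ obtained as its complement. Each cycle of $F$ uses edges from $M_2$ and $M_3$ only, and since no two edges of the same 1-factor are adjacent, consecutive edges around any cycle must alternate between $M_2$ and $M_3$. An alternating cycle has even length, so $F$ is a 2-factor all of whose cycles are even, as required.

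For the converse, I would begin with a 2-factor $F$ whose cycles are all even and let $M_1 = E \setminus F$ be the complementary 1-factor. Color every edge of $M_1$ with color 1. On each cycle of $F$, which has even length, color the edges alternately with colors 2 and 3; this is well defined exactly because the cycle length is even. The key verification is properness at every vertex $v$: the unique $M_1$-edge at $v$ carries color 1, while the two $F$-edges at $v$ are consecutive on their cycle and therefore receive the two different colors 2 and 3. Hence the three edges at $v$ get three distinct colors, giving a valid 3-edge-coloring.

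The content of the argument is light; the only point that really has to be handled with care---and the step I would regard as the crux---is the parity observation that welds the two directions together: a proper 2-edge-coloring of a cycle (equivalently, an alternation between two 1-factors along it) exists if and only if the cycle is even. Everything else is bookkeeping about complements of factors in a cubic graph.
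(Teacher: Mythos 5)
Your proof is correct and is exactly the standard argument the paper relies on: the paper states Theorem~\ref{thm:corr} without proof as a well-known fact, and its surrounding discussion (the complement of a 1-factor is a 2-factor; the union of two disjoint 1-factors is an even 2-factor since the edges alternate) is precisely your two directions. Both the forward parity argument and the converse alternating 2-coloring of even cycles, together with the observation that the complement of a 2-factor in a cubic graph is a perfect matching, are complete and sound.
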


We will refer to a 2-factor where all cycles have even length as an \textit{even\/ $2$-factor}.
Denoting the maximum vertex degree of the graph by $\Delta$,
by Vizing's theorem~\cite{V} the chromatic index is either $\Delta$ or $\Delta+1$.
This partitions graphs into two classes, called \emph{Class 1} and
\emph{Class 2}, respectively.

The chromatic index of a cubic graph is connected to several fundamental
problems. Tait~\cite{T} discovered that the four-color problem is equivalent to
showing that simple bridgeless connected planar cubic graphs have chromatic index 3.
Consequently, a 3-edge-coloring of a cubic graph is occasionally called a
\emph{Tait coloring}. 
For nonplanar graphs, it is possible for simple bridgeless connected cubic graphs
to have chromatic index 4, and such graphs are called \emph{snarks}, typically
also requiring the girth to be at least 5.
Snarks are related
to various important problems in graph theory, including the cycle double cover conjecture
and the 5-flow conjecture.

Given a 3-edge-coloring of a cubic graph, the problem of finding
more \mbox{3-edge-colorings} is here considered. This is accomplished in the
context of local transformations known as edge-Kempe switches.
The impact of edge-Kempe switching on Class-1 cubic graphs up to order 30 
and Class-1 bipartite cubic graphs up to order 36 is
computationally evaluated. 
Edge-Kempe switching is further 
involved in cycle switching of Steiner triple systems. It is discussed
how the cubic graph underlying cycle switching can be utilized to improve
the established algorithm for classifying Steiner triple systems.

Notice that similar transformations can also be carried out for \emph{vertex} 
colorings \cite{BBFJ,Mohar}, so a term that is not specifying the setting, such 
as ``Kempe switching'', may cause confusion.
We refer to~\cite{CGGST} for a survey on graph edge-coloring.

This paper is organized as follows. In Section~\ref{sect:sts} cycle switching
of Steiner triple systems is described. 
Cycle switching of Steiner triple systems is closely related
to and form one motivation for studying edge-Kempe switching of
\mbox{3-edge-colorings}
of cubic graphs, which is the topic of Section~\ref{sect:color}. 
An algorithm to compute the edge-Kempe equivalence classes is presented and is then used to computationally investigate edge-Kempe
switching of cubic graphs up to order 30 (and for various subclasses of cubic graphs such as bipartite cubic graphs, planar cubic graphs, and 3-connected planar cubic graphs up to higher orders).
Certain conjectures related to the number of 3-edge-colorings in cubic graphs are also computationally verified.
Furthermore, two families of cubic graphs of orders $4n+2$ and $4n+4$ with $2^n$ edge-Kempe
  equivalence classes are presented. It is conjectured that
  there are no cubic graphs with more edge-Kempe equivalence classes.
  New families of nonplanar bipartite cubic graphs with exactly
  one edge-Kempe equivalence class are also obtained.

\section{Cycle Switching of Steiner Triple Systems}

\label{sect:sts}

A \emph{Steiner triple system} (STS) is a pair $(X,\mathcal{B})$, 
where $X$ is a finite set of \emph{points} and $\mathcal{B}$ is a 
set of 3-subsets of points, called \emph{blocks}, such that 
every 2-subset of points occurs in exactly one block. The size of 
the point set is the \emph{order} of the STS, and an STS of order $v$ 
is commonly denoted by STS$(v)$. STSs exist iff the order is
\[
v \equiv 1\mbox{\ or }3\!\!\!\pmod{6}.
\]
For more information about Steiner triple systems, see~\cite{C}.

Given an STS $(X,\mathcal{B})$, consider a block $\{a,b,c\}\in\mathcal{B}$.
We define $\mathcal{B}_a$ (resp.\ $\mathcal{B}_b$, $\mathcal{B}_c$)
to be the set of blocks that contain $a$ (resp.\ $b$, $c$),
except $\{a,b,c\}$. All blocks in $\mathcal{B}_a \cup \mathcal{B}_b\cup\mathcal{B}_c$
intersect $X \setminus \{a,b,c\}$ in exactly 2 points and may therefore be considered
as the set of edges $E$ in the graph $G = (X \setminus \{a,b,c\},E)$.

As each pair of points is in some block of an STS, $G$ is 3-regular.
Moreover, the edges coming from the blocks containing a given
point $i \in \{a,b,c\}$ form a 1-factor of $G$, which we denote by $F_i$.
Hence we have a 3-edge-coloring and the chromatic index of $G$ is 3.

The fact that the local mapping of an STS to a 3-edge-coloring of $G$ is
reversible is the core of switching of Steiner triple systems: any other
\mbox{3-edge-coloring} of $G$ also gives an STS. Finding 3-edge-colorings
is not easy in general: the problem of determining whether a cubic graph
has chromatic index 3 is NP-complete~\cite{H}. However,
for a given 3-edge-coloring---which the original STS gives us---one may
consider the following subset of easily computed transformations.

A basic property of 1-factors is that the union of two 1-factors, 
$F_i \cup F_j$, \mbox{$F_i \cap F_j = \emptyset$,} forms an even
\mbox{2-factor}; we denote the number of cycles of the \mbox{2-factor} by $m$.
As each of the $m$ cycles of the 2-factor has two \mbox{1-factors}
(perfect matchings), $F_i \cup F_j$ has $2^m$ ordered pairs of
1-factors and $2^{m-1}$ unordered pairs of \mbox{1-factors,} that is,
1-factorizations. So a total of $2^{m-1}$ 3-edge-colorings can in this way be obtained from
one 3-edge-coloring. 
Any such edge-coloring can be obtained via a sequence
of switches each of which takes place in just one of the cycles.

A maximal path or a cycle with edges colored with two colors is called
an \emph{edge-Kempe chain} and forms a central part of the work by 
Kempe~\cite{K} on the four-color theorem. The edge-Kempe chains of a
3-edge-colored cubic graph
can only be cycles. Switching the colors of a 2-edge-colored
cycle (or maximal path, in the general case) 
is called an \emph{edge-Kempe switch} in graph theory.
The transformation of blocks of a Steiner triple system induced 
by an edge-Kempe switch of $G$ is
called a \emph{cycle switch} in design theory, and if the length
of the cycle is 4---the shortest possible---then it is called a
\emph{Pasch switch}, due to the name of the configuration of blocks involved.
All of these switches are reversible.

Obviously, for Steiner triple systems,
the block $\{a,b,c\}$ can be chosen arbitrarily, and the total
number of 2-factors that can be considered for switching an 
STS$(v)$ is $v(v-1)/2$.
If all of these 2-factors consist of just one cycle, whereby nothing new
can be found, the Steiner triple system is said to be \emph{perfect}.
No perfect Steiner triple systems of order less than or equal to 21
exist~\cite{K1}, and only sporadic examples of larger orders
are known~\cite{GGM}.

For all admissible orders up to 19, all Steiner triple systems of a
given order are connected via a sequence of cycle switches~\cite{GGM2,KMO}.
This property is obviously not possible for orders where perfect
systems exist. One possible way of handling such cases is discussed
in \cite{DGG}. Another way is to transform
3-edge-colorings into arbitrary 3-edge-colorings rather than considering
only edge-Kempe switches. Such transformations have earlier been
proposed by Petrenjuk and Petrenjuk~\cite{PP} and are more recently
treated in~\cite{CG}.
For example, the perfect STSs of order 25 and 33
listed in~\cite{GGM} can in this way be transformed into STSs that
are not perfect.
See~\cite{O} for a survey on switching codes and designs.

Finally, let us have a brief look at the algorithm 
used for classifying the Steiner triple systems of order 19 in~\cite{KO1} 
(see also ~\cite[Sect.\ 6.1]{KO2})
and how the cubic graph discussed above actually
plays a central role in a possible improvement of that algorithm.
The original algorithm consists of three main phases: (i) classification
of the structures of type
\begin{equation}
\label{eq:seed}
\{a,b,c\} \cup \mathcal{B}_a \cup \mathcal{B}_b\cup\mathcal{B}_c,
\end{equation}
(ii) extending those structures to STS$(v)$s in all possible ways, and
(iii) carrying out isomorph rejection amongst the STS$(v)$s thereby
obtained. 

The number of structures obtained in step (i) is the number
of 3-edge-colorings of cubic graphs of order $v-3$, up to
isomorphism. A core observation is now that extension in step (ii) 
only depends on the edges of the cubic graph $G = (X \setminus \{a,b,c\},E)$
coming from \eqref{eq:seed}, the precise blocks of
\eqref{eq:seed} being irrelevant. Consequently, it suffices to consider 
each 3-chromatic cubic graph of order $v-3$ once in step (ii). 

For example, for STS(19)s, the 14648 structures extended in step (ii)
in \cite{KO1} can be reduced as there are 4207 cubic graphs of order 16
and 3986 of these are 3-chromatic. 
For STS(21)s, the smallest open case as for classifying Steiner triple
systems, the three corresponding numbers are 219104 (by~\cite{KO1}), 
42110, and 40440. Notice that these are numbers for
arbitrary graphs, that is, disconnected graphs are also included.
Step (ii) is the most time-consuming part of
the algorithm, so the ratio of the numbers of instances is close to the
ratio of the total run times.

\section{Edge-Kempe Switching of 3-Edge-Colorings}
\label{sect:color}
The concept of edge-Kempe switching of 3-edge-colorings of cubic graphs was introduced
in the previous section; in this section we shall investigate various aspects of
such switching. 

Due to the connection with the four-color problem, planar graphs have
been of special interest in the work on edge-Kempe switching of
3-edge-colorings of cubic graphs, but also other particular classes of
graphs have been considered. Some of these results can found in
~\cite{BH1,BH2}. One extremal case is when there is exactly one
3-edge-coloring of a cubic graph. We will call such a graph 
\textit{uniquely\/ $3$-edge-colorable}. For a better understanding,
we include a proof of the following well-known result.

\begin{theorem}
\label{thm:unique}
If a cubic graph has a unique\/ $1$-factorization---that is, a unique\/ 
$3$-edge-coloring---then it has exactly three Hamiltonian cycles.
\end{theorem}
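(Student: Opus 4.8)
The plan is to set up an explicit correspondence between the Hamiltonian cycles of $G$ and the unordered pairs of color classes in its unique $3$-edge-coloring. Write the unique $1$-factorization as $\{F_1,F_2,F_3\}$. The three pairwise unions $F_1\cup F_2$, $F_1\cup F_3$, $F_2\cup F_3$ are even $2$-factors, and I claim that each of them is in fact a single (Hamiltonian) cycle; conversely, I will show that every Hamiltonian cycle of $G$ arises as one of these three unions. Together these two facts yield exactly three Hamiltonian cycles, and the bulk of the work is the converse direction.

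First I would prove that each $F_i\cup F_j$ is a single cycle. Recall from the discussion preceding the statement that if $F_i\cup F_j$ decomposes into $m$ cycles, then it admits $2^{m-1}$ distinct $1$-factorizations, obtained by independently swapping the two matchings on each cycle. If some $F_i\cup F_j$ had $m\ge 2$ cycles, switching the colors along a single one of them would replace $\{F_i,F_j\}$ by a different unordered pair $\{F_i',F_j'\}$ while leaving the third class $F_k$ fixed; the resulting $\{F_i',F_j',F_k\}$ would then be a $3$-edge-coloring different from the original. (Here one must check that the change is genuine even though the colors are unlabeled, which follows because $F_k$ is common to both colorings and the remaining pair has changed.) This contradicts uniqueness, so each $F_i\cup F_j$ is a connected $2$-regular spanning subgraph, i.e.\ a Hamiltonian cycle. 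These three cycles are pairwise distinct, since $F_i\cup F_j$ meets exactly two of the three nonempty color classes.

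The converse is the main step. Let $H$ be any Hamiltonian cycle of $G$. Since $G$ is cubic, the edges not on $H$ form a $1$-factor $M=E(G)\setminus E(H)$. Moreover $G$ has even order (a regular graph of odd degree does), so $H$ is an even cycle and its edges split into two perfect matchings $M_1,M_2$ by alternating around the cycle. Then $\{M_1,M_2,M\}$ is a $3$-edge-coloring of $G$, so by uniqueness it must coincide with $\{F_1,F_2,F_3\}$. In particular $M=F_k$ for some $k$ and $\{M_1,M_2\}=\{F_i,F_j\}$ for the other two indices, whence $H=M_1\cup M_2=F_i\cup F_j$. Thus every Hamiltonian cycle is one of the three unions found above, completing the count.

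The main obstacle is this converse step, and specifically the observation that a Hamiltonian cycle of a cubic graph, being of even length, can itself be $2$-edge-colored; this is what lets one recover a full $3$-edge-coloring from $H$ and invoke uniqueness. A secondary point requiring care, in the forward direction, is that a single-cycle switch really does produce a coloring distinct from the original once the unlabeled nature of the colors is taken into account; this is handled by noting that the untouched class $F_k$ pins down which pair has been altered.
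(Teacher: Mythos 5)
Your proof is correct and follows essentially the same route as the paper's: uniqueness forces each pairwise union $F_i\cup F_j$ to be a single (Hamiltonian) cycle, and any further Hamiltonian cycle would, via its two alternating matchings together with its complementary $1$-factor, yield a different $3$-edge-coloring. You simply spell out in more detail the steps the paper states in three sentences.
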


\begin{proof}
If the 3-edge-coloring is unique, edge-Kempe switching gives
nothing new, and the union of two 1-factors (colors) must form
a Hamiltonian cycle. This gives three Hamiltonian cycles.
The theorem now follows as further Hamiltonian cycles would
give different 3-edge-colorings.
\end{proof}

Thomason proved~\cite{T3} that the converse of Theorem~\ref{thm:unique} does not hold. In particular, he showed that all generalized Petersen graphs of the form GP$(6k+3, 2)$ with $k \geq 0$ contain exactly three Hamiltonian cycles, but that they are not uniquely 3-edge-colorable if $k \geq 2$. So Thomason's smallest counterexample is GP$(15, 2)$, which has order 30. 

In~\cite[Table~7]{GMZ} all cubic graphs up to order 32 with exactly three
Hamiltonian cycles are determined, 
and we generated all 3-edge-colorings of those graphs. 
In our work, we used two independent algorithms to generate all 3-edge-colorings
of graphs. The first algorithm---which was already used and tested before 
in~\cite{BGHM}---colors the edges one at a time, and 
the second algorithm generates all perfect matchings and then determines which 
combinations of three of these perfect matchings yield a partitioning of the 
edge set of the graph. 

The computations for the cubic graphs up to order 32 with exactly three Hamiltonian 
cycles led to the following observation.

\begin{observation} \label{obs:3hc}
There are exactly three cubic graphs up to\/ $32$ vertices with three Hamiltonian cycles that are not uniquely\/ $3$-edge-colorable: the generalized Petersen graph {\rm GP}$(15,2)$ of order\/ $30$ and two graphs of order\/ $32$.
\end{observation}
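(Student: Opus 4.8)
Since the statement concerns a finite, explicitly described family of graphs, the plan is to reduce it to a verifiable enumeration and to pin down precisely which finite set must be examined. The starting point is the complete catalog of cubic graphs of order at most $32$ having exactly three Hamiltonian cycles, available from \cite[Table~7]{GMZ}. Note first that every graph in this family contains a Hamiltonian cycle and, since a cubic graph has even order, that cycle has even length; two-coloring the cycle and assigning a third color to the complementary perfect matching (the $n/2$ edges left after the cycle uses two edges at each vertex) shows that every graph in the catalog is $3$-edge-colorable. Hence each admits at least one $3$-edge-coloring, and the only question is how many.

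The core step is then to compute, for each graph in the catalog, the exact number of $3$-edge-colorings, with the three colors left unlabeled as throughout this paper, so that a graph is uniquely $3$-edge-colorable precisely when this count equals one. By Theorem~\ref{thm:unique}, unique $3$-edge-colorability forces exactly three Hamiltonian cycles, so the uniquely $3$-edge-colorable cubic graphs of order at most $32$ form a subset of the catalog; consequently the graphs we seek are exactly those members of the catalog whose coloring count exceeds one. I would therefore tally these exceptions and confirm that there are exactly three, of orders $30$, $32$, and $32$, and that the order-$30$ example is {\rm GP}$(15,2)$.

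For the enumeration itself I would run the two independent coloring procedures described above---one coloring the edges one at a time, the other generating all perfect matchings and testing which unordered triples partition the edge set---and require that they return identical counts on every graph. The main obstacle is not any single mathematical step but ensuring correctness and completeness of the computation: one must verify that the input catalog is exhaustive, that no $3$-edge-coloring is overlooked, and that the unlabeled-color convention (each unlabeled coloring corresponding to $3!$ labeled ones) is applied uniformly. Agreement of the two independent algorithms on all graphs, the consistency check that every graph flagged as uniquely colorable indeed has all three of its color-pair unions Hamiltonian as demanded by Theorem~\ref{thm:unique}, and the reappearance of {\rm GP}$(15,2)$ among the exceptions in line with Thomason's result \cite{T3} together provide the needed validation.
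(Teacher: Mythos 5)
Your proposal matches the paper's approach exactly: the authors take the catalog of cubic graphs up to order $32$ with exactly three Hamiltonian cycles from \cite[Table~7]{GMZ}, generate all $3$-edge-colorings of each using the same two independent algorithms you describe, and read off the three exceptions. Your added remarks (that Hamiltonicity guarantees $3$-edge-colorability, and that Theorem~\ref{thm:unique} situates the result) are correct but not needed beyond what the paper's computational verification already provides.
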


The two graphs of order 32 from Observation~\ref{obs:3hc} can be obtained from GP$(15, 2)$ by blowing up a vertex to a triangle.

If an edge-coloring can be obtained from another through a
sequence of edge-Kempe switches, then the edge-colorings are
said to be \emph{edge-Kempe equivalent} and belong to the same
\emph{edge-Kempe equivalence class}. Graphs with a unique
edge-coloring (cf.\ Theorem~\ref{thm:unique}) obviously have
just one edge-Kempe equivalence class.
Examples of other classes of graphs with exactly one edge-Kempe equivalence class include 
the prism graphs (also called circular ladder graphs), 
twisted (or crossed) prism graphs, and 
M\"obius ladders of order divisible by 4~\cite{BH1} (see Figure~\ref{fig:prisms}).

One may further consider automorphisms of the cubic graph to get another concept of equivalence
which merges some of the edge-Kempe equivalence classes. However, such equivalence is not considered
in this study (graph isomorphism was just briefly mentioned in Section~\ref{sect:sts}).

In~\cite{Mohar} Mohar poses the problem of classifying the bipartite cubic graphs 
that have exactly one edge-Kempe equivalence class.
Belcastro and Haas~\cite{BH1} provide a partial answer to this problem.

\begin{theorem}{\rm \cite[Corollary 4.3]{BH1}}
\label{thm:planar}
Every planar bipartite cubic graph has exactly one edge-Kempe equivalence
class.
\end{theorem}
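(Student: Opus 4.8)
The plan is to pass from 3-edge-colorings to face colorings via the classical correspondence with the Klein four-group, under which edge-Kempe switches become elementary ``region recolorings,'' and then to exploit the fact that bipartiteness forces a very rigid face-coloring structure. I would first record the easy preliminaries: since $G$ is bipartite, K\"onig's edge-coloring theorem places it in Class 1, so a 3-edge-coloring exists and at least one edge-Kempe equivalence class is present; moreover a cubic graph with a bridge is not 3-edge-colorable, so $G$ is bridgeless, and I may assume $G$ is connected. After fixing a plane embedding, the goal reduces to showing that any two 3-edge-colorings are edge-Kempe equivalent.

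Next I would set up the correspondence. Identify the three unlabeled colors with the three nonzero elements $a,b,c$ of the Klein four-group $\mathbb{Z}_2\times\mathbb{Z}_2$, which satisfy $a+b=c$ and so on. For a bridgeless plane cubic graph there is a bijection, up to a global translation, between proper face colorings $\phi$ by the four group elements (adjacent faces receiving different values) and 3-edge-colorings, obtained by assigning to each edge the sum $\phi(f_1)+\phi(f_2)$ of its two bordering faces: at each vertex the three surrounding faces are pairwise adjacent, hence receive three distinct values, so the three incident edges receive the three distinct nonzero colors. The key computation is that an edge-Kempe switch on a cycle $C$ whose edges carry only the colors $a$ and $b$ corresponds exactly to adding the third element $c$ to every face lying inside $C$: interior and exterior edges are unchanged, while each edge of $C$ has its color shifted by $c$, which swaps $a\leftrightarrow b$ along $C$. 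Thus edge-Kempe equivalence becomes equivalence of face colorings under such region translations.

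The role of bipartiteness enters here. Because $G$ is bipartite every face boundary has even length, so the planar dual of $G$ is an Eulerian triangulation; such a triangulation is properly 3-vertex-colorable, and its 3-coloring is unique up to permutation of colors. Translating back, $G$ admits an essentially unique proper face 3-coloring $\psi$, using only three of the four group elements, and this $\psi$ will serve as the canonical target. I would then show that every proper face 4-coloring can be brought to $\psi$ (up to global translation) by a sequence of region translations, that is, by edge-Kempe switches: first eliminate the superfluous fourth color by repeatedly flipping regions bounded by two-colored cycles, and then invoke the uniqueness of the triangulation's 3-coloring to identify the result with $\psi$. Since all switches are reversible, this places every 3-edge-coloring in a single equivalence class.

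I expect the main obstacle to be precisely this reduction step---showing that region translations always suffice to drive an arbitrary 4-coloring down to the canonical 3-coloring. This is where bipartiteness must be used in an essential way, since the corresponding statement is false for general planar cubic graphs. The natural line of attack is an induction on the number of faces carrying the fourth color, organized around the nesting of the two-colored cycles in the plane: each such cycle is a Jordan curve, so the relevant regions are laminar, and the even-length condition on the faces should be exactly what guarantees that a reducing flip can always be found without reintroducing the fourth color elsewhere.
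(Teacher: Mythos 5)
First, a point of reference: the paper does not prove this statement at all---it is quoted verbatim from Belcastro and Haas \cite[Corollary 4.3]{BH1}---so there is no internal proof to compare against; your attempt has to stand on its own as a proof of the cited result.

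Your framework is correct and is essentially the classical route: K\"onig gives Class 1, a bridge would kill 3-edge-colorability, the Klein-four-group Tait correspondence between face 4-colorings (mod global translation) and 3-edge-colorings is right, the computation showing that an edge-Kempe switch on an $\{a,b\}$-cycle equals adding $c=a+b$ to all interior faces is right, and the dual of a bipartite plane cubic graph is indeed an Eulerian triangulation whose proper 3-coloring is unique up to permutation. The genuine gap is exactly the step you flag yourself: showing that region translations always reduce an arbitrary proper face 4-coloring to the canonical 3-coloring. That step \emph{is} the theorem---everything before it is a reformulation---and it is the content of Fisk's theorem that all 4-colorings of an Eulerian sphere triangulation are Kempe equivalent, which is a nontrivial result with a genuinely delicate proof. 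Your proposed induction ``on the number of faces carrying the fourth color'' is not yet an argument: a region translation by $g$ turns $d$-faces into $(d+g)$-faces \emph{and vice versa} inside the chosen cycle, so a flip reduces the count of the superfluous color only if the interior happens to contain more $d$-faces than $(d+g)$-faces, and you give no mechanism guaranteeing such a cycle exists; moreover ``the fourth color'' is not even well-defined before the reduction succeeds, since the target 3-subset of group elements is only determined up to global translation. The sentence ``the even-length condition on the faces should be exactly what guarantees that a reducing flip can always be found'' is where bipartiteness must enter (the statement is false for general planar cubic graphs, cf.\ Table~\ref{tab:planar_kempe}), but no such guarantee is exhibited. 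To close the gap you would need either to import Fisk's theorem explicitly (and then reconcile vertex-Kempe moves on the dual with single-cycle region translations, which do not coincide move-for-move since a dual Kempe component can have several boundary cycles) or to supply the missing parity/counting lemma for even triangulations.
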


In \cite{BH1} it is shown that twisted prism graphs 
(see Figure~\ref{fig:prisms}) have exactly one
edge-Kempe equivalence class, and such a graph of order 12
is the smallest nonplanar bipartite cubic 
graph with exactly one edge-Kempe equivalence class.
The next examples of nonplanar bipartite cubic graphs with
exactly one edge-Kempe equivalence class can be found for order 16,
one of which is again a twisted prism graph; the other two graphs
out of three are shown in Figure~\ref{fig:examples}. 

\begin{figure}[!htb]
\begin{center}
  \subfloat[]{\label{fig:hamster}\includegraphics[width=0.30\textwidth]{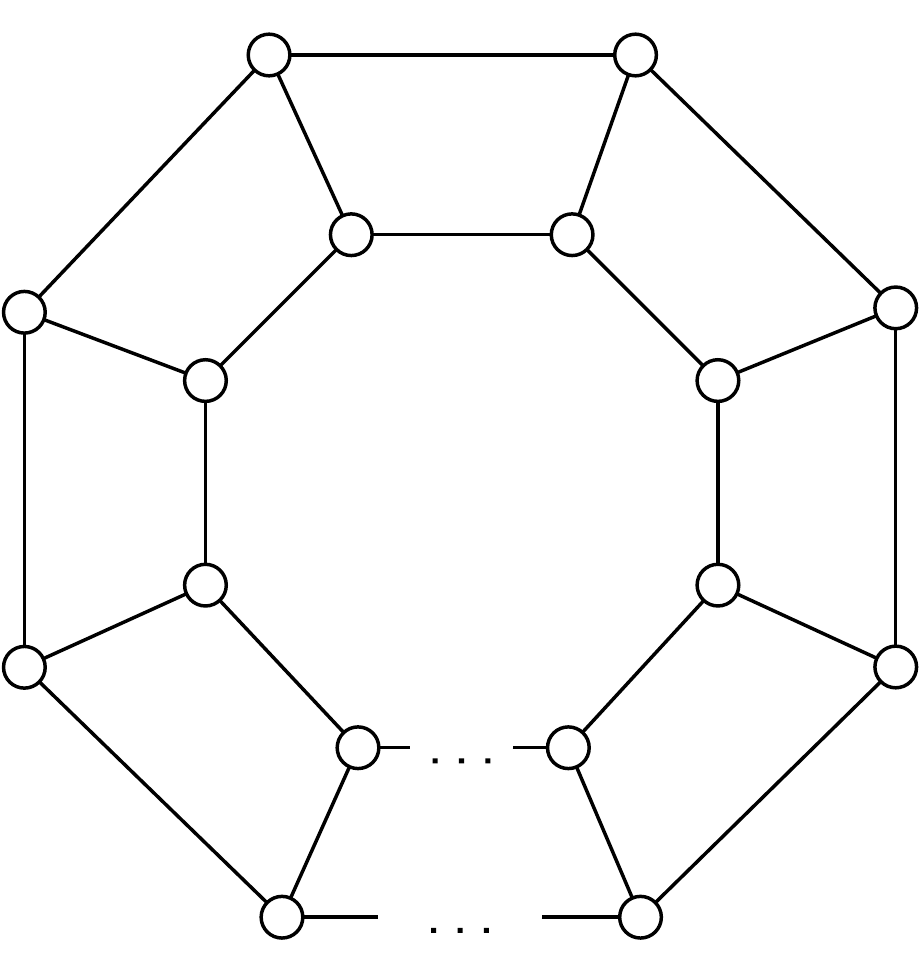}}   \quad
  \subfloat[]{\label{fig:twisted}\includegraphics[width=0.30\textwidth]{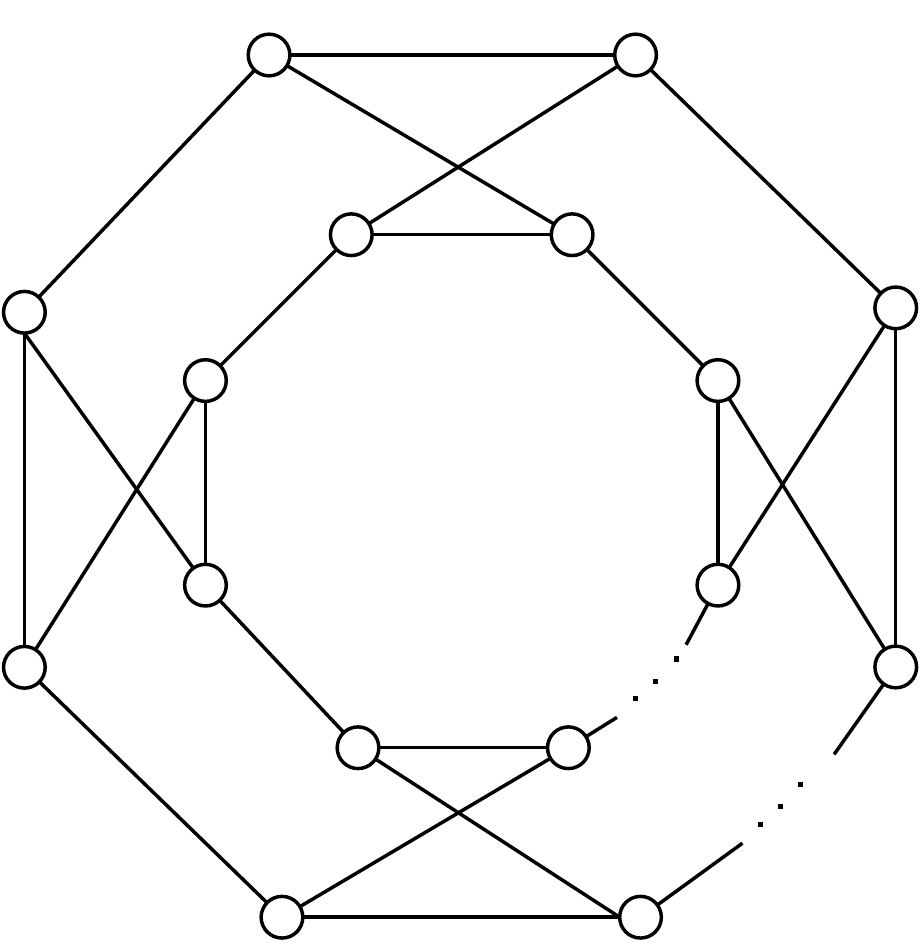}}   \quad
    \subfloat[]{\label{fig:mobius}\includegraphics[width=0.30\textwidth]{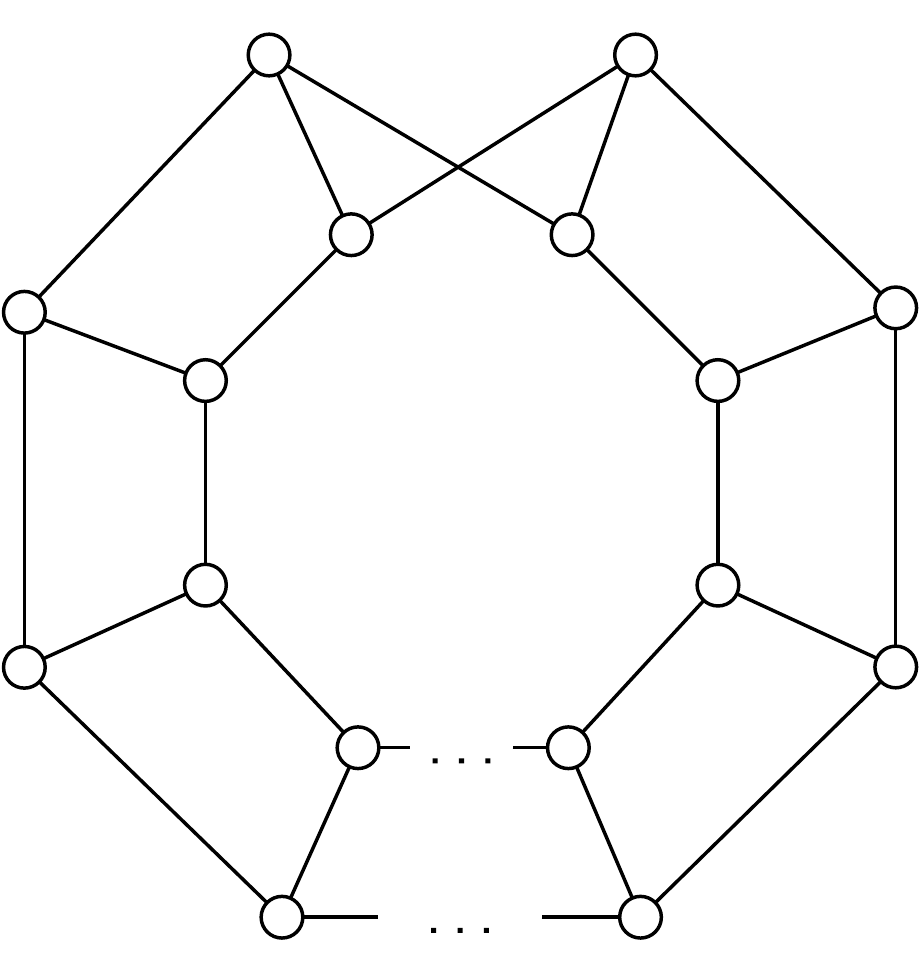}}   
	\caption{Prism graphs (a), twisted prism graphs (b), and 
M\"obius ladders (c)}
	\label{fig:prisms}
\end{center}
\end{figure}

\begin{figure}[!htb]
\begin{center}
\begin{minipage}[c]{.35\textwidth}
\centering   
   \subfloat[]{\label{fig:graph0}\includegraphics[width=1.0\textwidth]{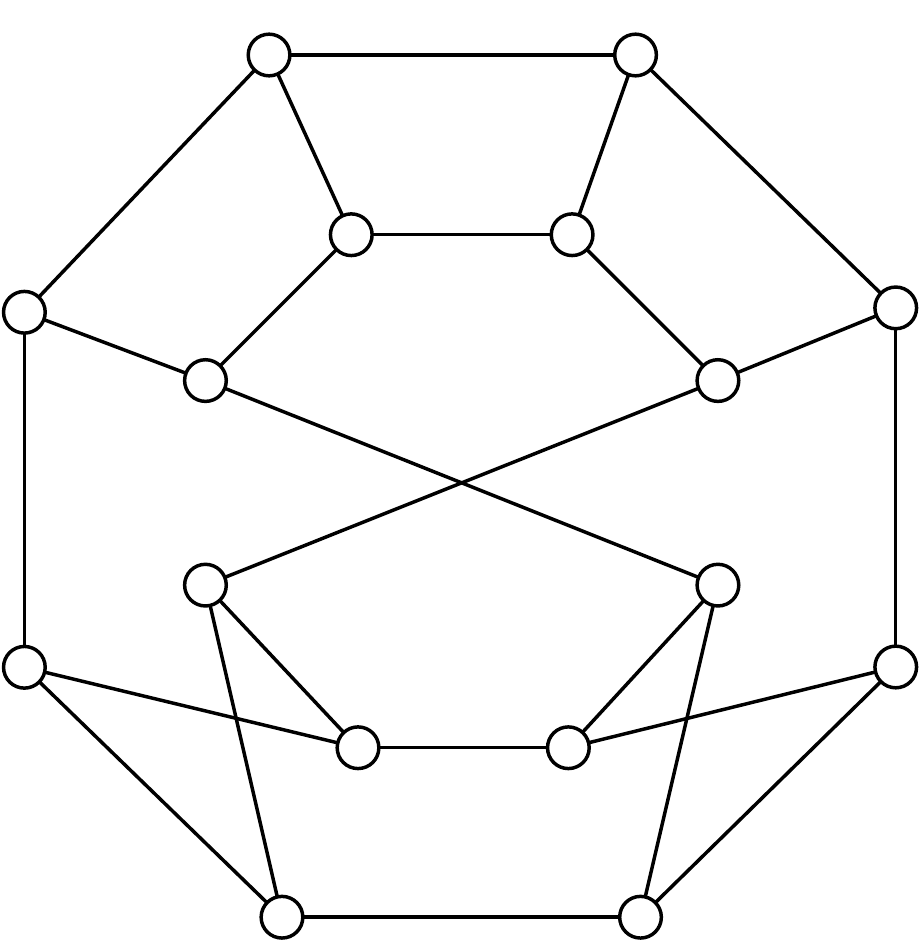}} 
\end{minipage}      
    \qquad  \qquad
\begin{minipage}[c]{.4\textwidth}
\centering   
\vspace{-0.8em} 
   \subfloat[]{\label{fig:graph2}\includegraphics[width=1.0\textwidth]{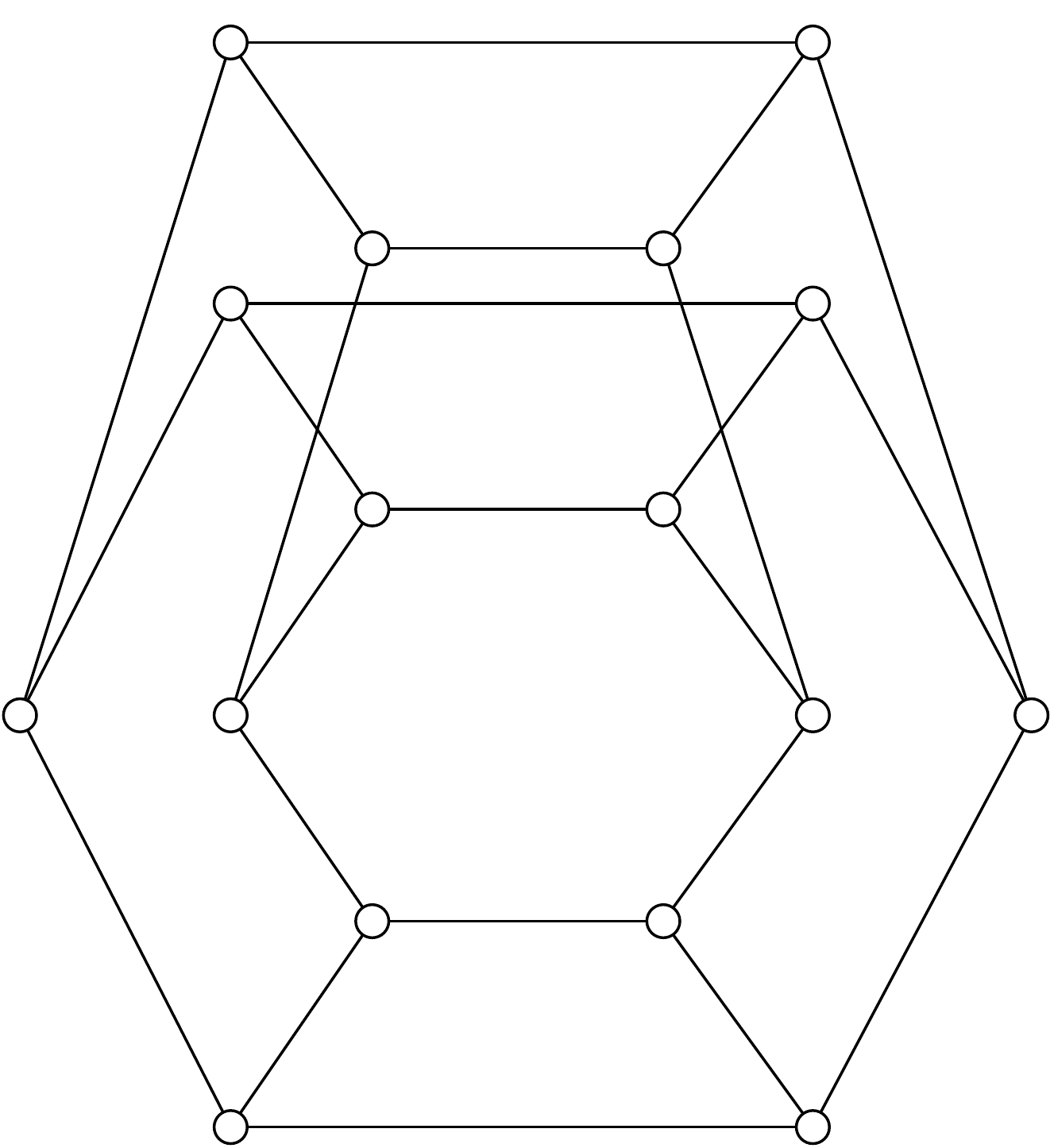}}   
\end{minipage}      
	\caption{Two nonplanar bipartite cubic graphs of order 16 with exactly one edge-Kempe equivalence class}
	\label{fig:examples}
\end{center}
\end{figure}

We shall now prove that the graph in Figure~\ref{fig:graph2}
belongs to an infinite family of graphs with exactly
one edge-Kempe equivalence class. Before defining the
family, let us consider some definitions and basic results
for ladder graphs. A \emph{ladder graph} $L_n$, $n \geq 1$, is a graph isomorphic to
$P_2 \times P_n$, and
a \emph{ladder graph with pendants} $LP_n$, $n \geq 1$,
is a graph obtained by adding four pendant edges and
vertices to $L_n$ as shown in Figure~\ref{fig:LP}. We say that the
pendant edges $e_a,e_b$ and $e_c,e_d$ form \emph{end pairs}
and $e_a,e_c$ and $e_b,e_d$ form \emph{side pairs}. 
(Arguably, $LP_n$ looks more like a real-world ladder than the
ladder graph $L_n$.)
An attempt to 3-edge-color a ladder graph with pendants immediately gives
the following lemma.

\begin{figure}[htb!]
	\centering
	\includegraphics[width=0.5\textwidth]{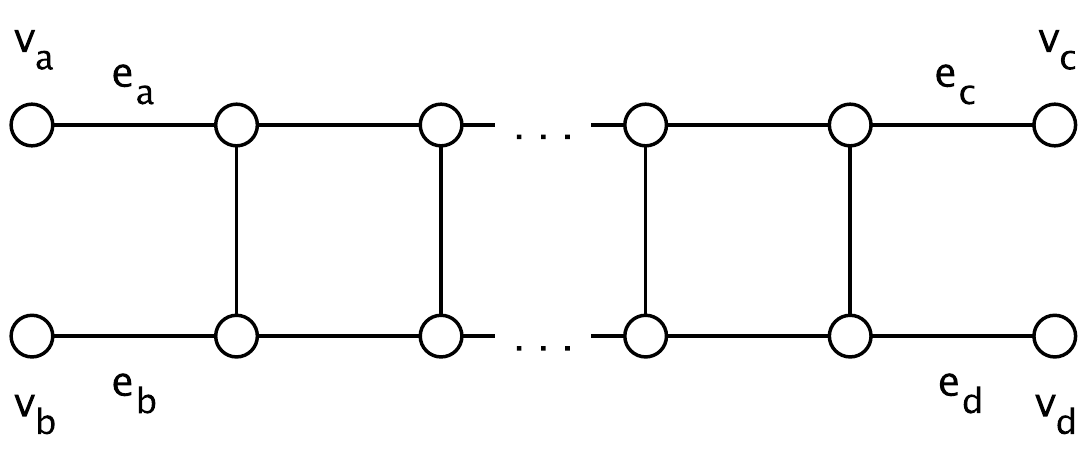}
	\caption{Ladder graphs with pendants}
	\label{fig:LP}
\end{figure}

\begin{lemma}
\label{lem:types}
Consider a graph $LP_n$ with $n$ even. If the edges of an end pair have
different fixed colors, then this uniquely defines the\/ $3$-edge-coloring of the entire graph, 
and the colors of the two edges of a side pair coincide. If the edges of an
end pair have the same fixed color, then also the colors of the edges of the other 
end pair coincide.
\end{lemma}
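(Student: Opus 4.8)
The plan is to exploit the rigidity of a cubic graph under proper $3$-edge-coloring by propagating colors column by column along the ladder, starting from the end whose pendant pair has fixed colors. First I would fix notation: write the two rails as $v_1,\dots,v_n$ (top) and $w_1,\dots,w_n$ (bottom), let $r_j$ denote the rung $v_jw_j$, and let the four pendants attach at the corners $v_1,w_1$ (first column) and $v_n,w_n$ (last column), with $e_a,e_b$ the end pair at the first column and $e_c,e_d$ the end pair at the last. Since every vertex meets exactly three edges and the three colors are unlabeled, the constraint at each vertex is simply that its three incident edges receive the three distinct colors.

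The core of the argument is a one-column propagation step. At the first column the vertex $v_1$ carries $e_a$, the rail edge $v_1v_2$, and the rung $r_1$, while $w_1$ carries $e_b$, the rail edge $w_1w_2$, and the same rung $r_1$. I would record the elementary observation that governs everything: if the two ``incoming'' edges (here $e_a,e_b$, or the two rail edges entering a later column) carry \emph{different} colors, then the shared rung is forced to be the third color and the two ``outgoing'' rail edges are forced to be the incoming two colors interchanged; whereas if the incoming edges carry the \emph{same} color, then the rung may be either of the two remaining colors (a free binary choice) but the two outgoing rail edges are forced to be \emph{equal}. The decisive feature is that each alternative is self-reproducing: a ``different'' column feeds a ``different'' column and an ``equal'' column feeds an ``equal'' column, so the regime set at the fixed end is preserved along the entire ladder by a straightforward induction on the column index.

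For the first statement I would run the ``different'' regime: with $e_a,e_b$ distinct, every rung is forced to the unique third color and the two rails alternate between the two end colors, with no choice ever available, which yields uniqueness of the coloring. Reading off the last column and using that $n$ is \emph{even}, the rail edges entering column $n$ are in the same configuration as those leaving column $1$, so the constraints at $v_n$ and $w_n$ force $e_c=e_a$ and $e_d=e_b$; hence each side pair $\{e_a,e_c\}$ and $\{e_b,e_d\}$ is monochromatic. For the second statement I would run the ``equal'' regime: the invariant ``the two rail edges in a column share a color'' is maintained across all columns, so the two rail edges entering column $n$ are equal, and the constraints at $v_n$ and $w_n$ then force $e_c=e_d$, giving the other end pair a common color.

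The routine parts are the two inductions, which merely iterate the single-column observation. The one place demanding care is the parity bookkeeping at the far end in the first statement: the conclusion that the side pairs \emph{coincide}, rather than swap, is exactly where the hypothesis that $n$ is even is used, and I would make sure the alternation count from column $1$ to column $n$ lands on the correct colors. This is the main obstacle, though still an elementary one; everything else follows by propagation.
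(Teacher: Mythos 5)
Your propagation argument is correct: the single-column observation (distinct incoming rail colors force the rung to the third color and swap on exit; equal incoming colors leave a binary choice for the rung but keep the outgoing rails equal) is exactly right, and your parity bookkeeping at column $n$ correctly isolates where the hypothesis that $n$ is even is needed. The paper offers no written proof of this lemma (it is dismissed with ``an attempt to 3-edge-color a ladder graph with pendants immediately gives the following lemma''), and your column-by-column induction is precisely the argument that remark alludes to.
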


Depending on the edge-coloring of an end pair, we say that a 3-edge-coloring 
of a ladder graph with pendants has
type D (different) or S (same). For type S, it turns out that edge-Kempe switching is 
powerful in the following sense.

\begin{lemma}
\label{lem:ladder}
For $n\geq 2$ and a graph $LP_n$ of type S with
fixed colors for the end pairs, all\/ $3$-edge-colorings are in the same
edge-Kempe equivalence class.
\end{lemma}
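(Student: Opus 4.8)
My plan is to recode each type-S coloring as a walk in the triangle $K_3$ and to show that the only switches compatible with fixed end colors act transitively on these walks. Write $\alpha$ for the common color of $e_a,e_b$ and $\beta$ for that of $e_c,e_d$. An easy induction along the rungs, in the spirit of Lemma~\ref{lem:types}, shows that in every type-S coloring the two horizontal edges lying in the same gap between consecutive rungs receive the same color; denote it by $h_i$ for the $i$-th gap and put $h_0=\alpha$, $h_n=\beta$. At each interior vertex the three incident edges carry the three colors, so the rung color is forced to be the one missing from its two neighbouring horizontals, and the sole remaining constraint is that consecutive $h_i$ differ. Hence type-S colorings with the prescribed end colors are in bijection with walks $h_0=\alpha,h_1,\dots,h_n=\beta$ of length $n$ in $K_3$ having no two equal consecutive entries.

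Next I would translate edge-Kempe switches into walk moves. Because the end colors are held fixed, the only usable chains are the edge-Kempe cycles lying in the interior of $LP_n$: a chain reaching a pendant is really a path whose switch would recolor an end pair. For a color $s$, deleting all $s$-colored edges leaves a subgraph that is $2$-regular away from the pendants, and I would check that its cycles correspond exactly to the maximal runs of the walk bounded on both sides by an entry equal to $s$ (the boundary positions $0$ and $n$ counting as such precisely when $\alpha=s$ or $\beta=s$). Switching one such cycle interchanges the two colors different from $s$ on the interior of the run and leaves the two bounding $s$-entries fixed. So the admissible moves are: pick a color $s$ and two successive occurrences of $s$ in the walk, and swap the other two colors on all entries strictly between them.

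With this dictionary the task becomes purely combinatorial, and I would prove it by induction on $n$ (the cases $n\le 2$ are immediate, the only nontrivial one being $\alpha=\beta$ with $n=2$, settled by a single $s=\alpha$ switch). Given a walk of length $n$, I first bring $h_{n-1}$ to a fixed target value $c\neq\beta$. Since $h_n=\beta$, provided $\beta$ occurs somewhere before position $n$, the move with $s=\beta$ based at the last such occurrence and at position $n$ swaps the two non-$\beta$ colors on all entries in between; in particular it flips $h_{n-1}$ to the other non-$\beta$ color. Once $h_{n-1}=c$, I freeze it and apply the induction hypothesis to the prefix $h_0,\dots,h_{n-1}$, a walk of length $n-1$ from $\alpha$ to $c$. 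Every move used on the prefix involves only positions $\le n-1$, hence is one of the admissible interior-cycle switches of the full graph and keeps the end pairs fixed, so every walk is driven to one and the same canonical walk.

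The step I expect to be the crux—and the reason why short $4$-cycle switches alone are not enough—is flipping $h_{n-1}$ when $\beta$ never occurs before position $n$. The prefix is then forced to alternate between $\alpha$ and the third color $\gamma$ (the one different from $\alpha$ and $\beta$), and no $s=\beta$ move is available. I would first manufacture a $\beta$-entry: applying the $s=\alpha$ switch to the initial stretch $\alpha,\gamma,\alpha$ replaces the middle $\gamma$ by $\beta$, after which the argument of the previous paragraph applies. The delicate part is the bookkeeping of the second paragraph—verifying that the claimed cycles are exactly the interior components of the two-color subgraph and that each switch realizes precisely the stated swap—whereas the inductive combinatorics is then routine.
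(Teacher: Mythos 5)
Your proof is correct, and it follows the same basic strategy as the paper's---a constructive induction along the ladder using edge-Kempe switches of cycles contained in the ladder (correctly observing that these are the only admissible chains once the pendant colors are pinned, since any $2$-colored path must terminate at pendant vertices)---but the organization is genuinely different. The paper works bottom-up, fixing the color of each rung in turn, and asserts that whenever a rung has the wrong color exactly one of three short local patterns applies: a $4$-cycle switch, a $6$-cycle switch, or two stacked $4$-cycle switches (its Figure~\ref{fig:ladder_proof}); the exhaustiveness of these three cases is justified only pictorially. You instead encode a type-S coloring as a walk $h_0=\alpha,\dots,h_n=\beta$ in $K_3$ (your rail/rung analysis checks out: the two rails of a gap share a color, the rung is the color missing from its neighbouring gaps, and the interior cycles of the two-colored subgraphs are exactly the intervals between consecutive occurrences of the excluded color, with positions $0$ and $n$ counting when the pendant color matches), and you run a top-down induction that first flips $h_{n-1}$ via a possibly long cycle anchored at the last prior occurrence of $\beta$, creating such an occurrence with one $4$-cycle switch when the tail alternates $\alpha,\gamma,\alpha,\gamma,\dots$, and then recurses on the frozen prefix. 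Your version requires more setup but yields an airtight and fully explicit case analysis of which switches are available; the paper's version is shorter and uses only switches of bounded length (cycles of length at most $6$), which is a slightly stronger conclusion, at the cost of leaving the completeness of its three-case analysis to the figures.
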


\begin{proof}
The proof is constructive, that is, we shall show how an arbitrary \mbox{3-edge-coloring}
can be obtained from a given 3-edge-coloring via edge-Kempe switching. Starting from 
the edges of one end pair and their fixed colors, we shall proceed by considering the color of the
edge incident to both of those, a \emph{rung} of the graph $LP_n$. 
After this, the color of another pair of edges will be uniquely determined and 
this procedure will be iterated.

There are three possible situations when the current color of a rung is not the one we want,
depicted in Figure~\ref{fig:ladder_proof} (coloring proceeds from the bottom to the top). In the relevant
case, we
(a) switch the colors ABAB of the edges of a 4-cycle,
(b) switch the colors ABABAB of the edges of a 6-cycle,
(c) first switch the colors ACAC of the edges of the upper \mbox{4-cycle} and
then switch the colors ABAB of the edges of the lower 4-cycle.
Note that when we approach the end of the ladder, the colors of the 
pendant edges in that end restrict possible edge-colorings, so if the current 
color of the second to last rung is not the desired one, then only the 
situation in Figure~\ref{fig:ladder_a} is possible and edge-Kempe switching 
in the \mbox{4-cycle} finalizes the \mbox{3-edge-coloring.}
\end{proof}

\begin{figure}[!htb]
\begin{center}
  \subfloat[]{\label{fig:ladder_a}\includegraphics[width=0.16\textwidth]{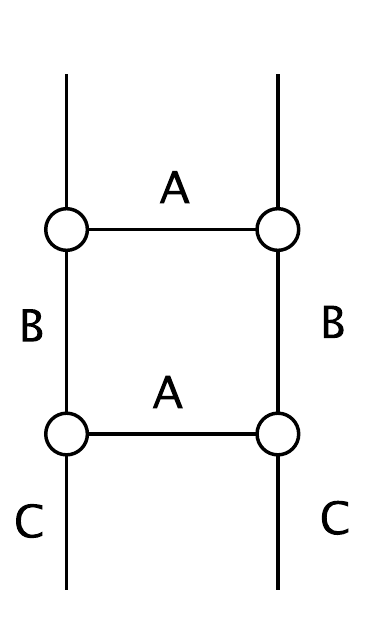}}   \qquad \qquad
  \subfloat[]{\label{fig:ladder_b}\includegraphics[width=0.16\textwidth]{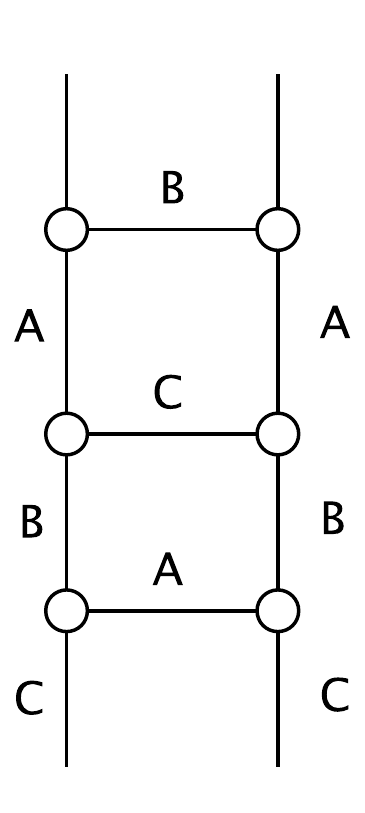}}   \qquad \qquad
    \subfloat[]{\label{fig:ladder_c}\includegraphics[width=0.16\textwidth]{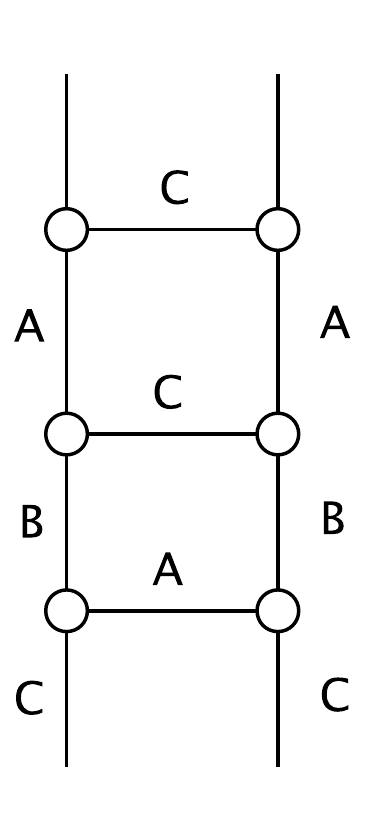}}   
	\caption{Ladder graphs with pendants used in the proof of Lemma~\ref{lem:ladder}}
	\label{fig:ladder_proof}
\end{center}
\end{figure}

Now we define an
\emph{eggbeater graph} $E_{i,j,k}$ as the graph obtained by taking 
three graphs that are isomorphic to $LP_i$, $LP_j$, and 
$LP_k$ and in the four cases of $v_a$, $v_b$, $v_c$, and $v_d$,
merging the corresponding vertices of the three graphs.
If $i$, $j$, and $k$ are even, then $E_{i,j,k}$ is bipartite and
nonplanar. Nonplanarity is shown by taking the endpoints of 
a rung from each of the three ladders
involved to get the vertices of a subgraph homeomorphic to $K_{3,3}$.
The graph in Figure~\ref{fig:graph2} is $E_{2,2,2}$.

\begin{theorem}
The graph $E_{i,j,k}$, where $i$, $j$, and $k$ are even, has exactly 
one edge-Kempe equivalence class of\/ $3$-edge-colorings.
\end{theorem}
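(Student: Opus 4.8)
The plan is to analyze the structure of an eggbeater graph $E_{i,j,k}$ by examining how the coloring of its three constituent ladders-with-pendants interacts at the four shared vertices $v_a, v_b, v_c, v_d$. The key fact to exploit is Lemma~\ref{lem:types}, which tells us that each $LP_n$ (with $n$ even) is almost rigidly colored: once the colors of an end pair are fixed, the entire coloring is either of type S or type D. Since the three ladders share exactly the four pendant vertices, the global coloring is governed by how colors are assigned at these vertices, and Lemma~\ref{lem:ladder} guarantees that within a single ladder of type S, all colorings are edge-Kempe equivalent.

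First I would set up coordinates: label the three colors $A, B, C$, and observe that at each merged vertex $v_a, v_b, v_c, v_d$ three pendant edges meet (one from each ladder), so at each such vertex the three incident pendant edges must receive all three distinct colors. This severely constrains the configuration. I would then argue that the color pattern on the twelve pendant edges (four vertices, three edges each) determines, via Lemma~\ref{lem:types}, whether each ladder is of type S or type D, and hence pins down the internal coloring up to the switching freedom described in Lemma~\ref{lem:ladder}. The strategy is to show that any 3-edge-coloring of $E_{i,j,k}$ can be transformed by edge-Kempe switches into one fixed canonical coloring.

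The main argument proceeds in two stages. In the first stage, I would use edge-Kempe switches \emph{confined to a single ladder} to reduce each type-S ladder to a canonical form via Lemma~\ref{lem:ladder}; these switches operate on cycles living entirely inside one $LP$ and therefore do not disturb the pendant colors shared with the other ladders. In the second stage, I would need edge-Kempe switches that pass \emph{through} the shared vertices to adjust the color assignment at $v_a, v_b, v_c, v_d$ and to convert ladders between types D and S. Here one identifies a suitable two-colored cycle that traverses two of the three ladders (entering and leaving through shared vertices) and switches it, thereby changing the boundary colors in a controlled way. A finite case analysis over the possible color patterns at the four shared vertices, combined with the rigidity from Lemma~\ref{lem:types}, should show that all such patterns are reachable from one another.

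The hard part will be the second stage: verifying that the two-colored cycles needed to reconfigure the shared vertices actually exist as edge-Kempe chains in the combined graph and that switching them yields a valid coloring whose ladders are again of a controlled type. Because the shared vertices couple the three ladders, one must track how a switch in one ladder forces changes propagating into the adjacent ladders through the shared pendant edges; the bipartiteness and evenness hypotheses on $i,j,k$ are what ensure these propagating chains close up into cycles rather than terminating improperly. I expect that reducing the problem to the base case $E_{2,2,2}$ (the graph of Figure~\ref{fig:graph2}, already known to have one class) and then showing that lengthening any ladder from $LP_n$ to $LP_{n+2}$ preserves single-class-ness will be the cleanest route, with the lengthening step handled uniformly by Lemma~\ref{lem:ladder}.
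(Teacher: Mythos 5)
Your overall framing---that the problem reduces to the boundary data at the four merged vertices, with Lemma~\ref{lem:types} providing rigidity inside each ladder and Lemma~\ref{lem:ladder} giving within-ladder Kempe connectivity for type S---matches the paper's starting point. But the proposal stops exactly where the proof has to begin, and it misses the paper's central idea. The paper splits the colorings by how many ladders are of type D, and for the case where some ladder (say $LP_i$) is of type D it \emph{contracts} that ladder to the two edges $\{v_a,v_c\}$ and $\{v_b,v_d\}$ (legitimate by Lemma~\ref{lem:types}, since the side pairs carry equal colors and, by Lemma~\ref{lem:ladder}, switching can always be arranged so that $e_a,e_c$ and $e_b,e_d$ lie on the same Kempe cycle). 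The resulting graph is a \emph{planar} bipartite cubic graph, so Theorem~\ref{thm:planar} (Belcastro--Haas) immediately gives a single equivalence class for all colorings of that type. This reduction is what makes the ``finite case analysis over color patterns at the four shared vertices'' unnecessary; nothing in your plan plays this role.

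Concretely, two of your steps are asserted rather than established and would need real work. First, the claim that lengthening a ladder from $LP_n$ to $LP_{n+2}$ ``preserves single-class-ness, handled uniformly by Lemma~\ref{lem:ladder}'' does not follow from that lemma: Lemma~\ref{lem:ladder} only concerns type-S colorings of a single ladder with \emph{fixed} end-pair colors, and says nothing about how the set of reachable boundary configurations of the whole eggbeater graph changes when a ladder is extended. Second, and more importantly, the connectivity between the all-type-S colorings and the colorings with a type-D ladder is the genuinely delicate point; the paper handles it by exhibiting, from an all-type-S coloring, a Kempe chain through $v_a$ and $v_c$ (obtainable by Lemma~\ref{lem:ladder}) whose switch converts two ladders from type S to type D, and separately shows that within the all-type-S stratum any permutation of colors at $v_a,v_b$ (and at $v_c,v_d$) is realizable by chains that enter a ladder at $v_a$ and exit at $v_b$, using the local configurations of Figure~\ref{fig:ladder_proof}. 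Your proposal correctly identifies that such through-chains are ``the hard part'' but does not construct them, so as written the argument has a gap precisely at the step that carries the content of the theorem.
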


\begin{proof}
Consider $E_{i,j,k}$ with $i$, $j$, and $k$ even. First consider all 3-edge-colorings
where one of the three ladder subgraphs with pendants, say $LP_i$, is of type D
(note that in later intermediate steps this subgraph can occasionally be of type S).
By the property of Lemma~\ref{lem:types}, this situation may be 
considered with a graph $G$ obtained by replacing $LP_i$ by two edges, 
$\{v_a,v_c\}$ and $\{v_b,v_d\}$, as long as we can force 
switching in the original graph to take place so that $e_a,e_c$ and 
$e_b,e_d$ are always in the same cycle when switching. Indeed, for type D this is
always the case, and for type S, by Lemma~\ref{lem:ladder} we can always do
edge-Kempe switching to get such a situation within the ladder subgraph.

Since $G$ is planar and bipartite, by Theorem~\ref{thm:planar} all 
3-edge-colorings of $G$ are in the same 
edge-Kempe equivalence class, and the same holds for the edge-colorings of $E_{i,j,k}$
of the given type.

Next we consider the 3-edge-colorings of $E_{i,j,k}$ where all three ladder subgraphs
with pendants are of type S. We shall constructively show how one such 3-edge-coloring
can be obtained from another. We first consider the edges incident to $v_a$ and $v_b$
(and then similarly for $v_c$ and $v_d$). We shall see how any two colors, say B and C,
can be transposed, that is, we consider edge-Kempe chains with B and C. Moreover, to be
able to treat the cases $v_a,v_b$ and $v_c,v_d$ separately, we need edge-Kempe chains that do 
not contain vertices from both of these pairs. Hence a chain going into a ladder
subgraph from $v_a$ should exit at $v_b$. Through proper switching we can make sure 
that this happens, and the cases in Figure~\ref{fig:ladder_proof} are relevant
also here. As a part of an edge-Kempe chain, we have in case (a) CAC and, after
switching BABA, CBC, and in case (b)/(c), CAC and CBCBC. Having done the edges incident
to $v_a$, $v_b$, $v_c$, and $v_d$---note that any permutation of three colors can be
obtained by at most two transpositions of colors---Lemma~\ref{lem:ladder} takes care 
of the rest.

Finally, we need to get between those main situations. For this, we
take a 3-edge-coloring where all three ladder subgraphs with pendants are of type
S and carry out switching to get a coloring that has an edge-Kempe chain through $v_a$ and
$v_c$ (which is possible due to Lemma~\ref{lem:ladder}).
Edge-Kempe switching then transforms the two involved ladder subgraphs with
pendants from type S to type D.
\end{proof}

Similar technniques can be used to obtain proofs and generalizations 
for other graphs encountered in our computational study, such as the
graph in Figure~\ref{fig:graph0}. It would be interesting to get 
more general characterization results for nonplanar bipartite cubic graphs 
with exactly one edge-Kempe equivalence class.

In \cite{BH1} Belcastro and Haas also state questions about
possible numbers of edge-Kempe equivalence classes
for 3-edge-colorable bipartite cubic graphs and for
arbitrary 3-edge-colorable cubic graphs. We shall now present
the algorithm used in our computational study of these and other
related questions.

In Algorithm~\ref{algo:kempe_equiv} we give the pseudocode of our algorithm 
to determine the edge-Kempe equivalence classes of a given cubic graph. 
Notice that we essentially have a problem of finding the connected components
of a graph where the vertices correspond to the 3-edge-colorings and the
edges to the edge-Kempe switches. There are several ways of handling such
implicit graphs, especially on the level of details (cf.\ \cite{KMO}).

In the algorithm we first compute and store 
each 3-edge-coloring as a triple of bitvectors $(a,b,c)$, where bit $i$ is 
set if the edge with number $i$ is colored in the color associated with 
the bitvector. Recall that colors are not labelled, and this is taken care
of by keeping the triple of bitvectors sorted, $a < b < c$.
The list of stored 3-edge-colorings is sorted so that binary search can be
used to find entries in logarithmic time
(alternatively, a hash function can be used).

The set of 3-edge-colorings forms the set of vertices of the implicit graph,
and edge-Kempe switching---which is done on-the-fly---gives the edges.
The connected components of the graph can be determined using 
Algorithm~\ref{algo:kempe_equiv}, which is a standard DFS algorithm \cite{HT}. 

A 3-edge-coloring imposes three even 2-factors, which are formed by the 
three pairs of 1-factors (color classes). In the core of the recursion,
edge-Kempe switching is carried out for every 
cycle of every 2-factor
of every 3-edge-coloring, with two exceptions for performance reasons: 
(i) we do not switch the
cycle that contains some fixed vertex $v$, and (ii) after performing a
switch, we do not carry out the reverse of the switch as this would
bring us to a edge-coloring that has already been visited. Exception (i) 
comes from the fact that switching all cycles of a 2-factor
does not change a 1-factorization. Hence switching one cycle of a
2-factorization gives the same result as switching at once all other cycles
in the 2-factorization, and therefore we may keep the edge-coloring of one
of the cycles, the one containing the chosen vertex $v$, fixed.
The fact that switching Hamiltonian cycles gives nothing new is a special
case of this observation.

\begin{algorithm}[htb!]
\caption{Edge-Kempe equivalence classes of a given cubic graph $G$}
{\tt edge-Kempe\_switch}(graph G)\\ \label{algo:kempe_equiv}
\vspace*{-5mm}
  \begin{algorithmic}[1]
	\STATE Generate and store all 3-edge-colorings of $G$
        \STATE Let $v$ be an arbitrary vertex of $G$ 
	\STATE Initialize all 3-edge-colorings of $G$ as not visited
        \FOR{every 3-edge-coloring $X$ of $G$}
        \IF{the 3-edge-coloring $X$ has not been visited} 
           \STATE {\tt edge-Kempe\_switch\_recursion}($X$,null,$v$) 
	\STATE Output the edge-Kempe equivalence class of $X$	
        \ENDIF
        \ENDFOR
  \end{algorithmic}
{\tt edge-Kempe\_switch\_recursion}(3-edge-coloring $X$, cycle $C$, 
vertex $v$)\\
\vspace*{-5mm}
  \begin{algorithmic}[1]
     \STATE{Mark the 3-edge-coloring $X$ as visited}
     \FOR{every even 2-factor $F$ imposed by $X$} 
     \FOR{every cycle $C' \neq C$ of $F$ that does not contain $v$}\label{algo:specialcase}
     \STATE Edge-Kempe switch $C'$ to get an edge-coloring $X'$ from $X$
     \IF{the 3-edge-coloring $X'$ has not been visited}
     \STATE Add $X'$ to the edge-Kempe equivalence class being constructed
     \STATE {\tt edge-Kempe\_switch\_recursion}($X'$,$C'$,$v$)
     \ENDIF
     \ENDFOR
     \ENDFOR
  \end{algorithmic}
\end{algorithm}

We used the \textit{snarkhunter}~\cite{BGM} program to generate all
cubic graphs and the two algorithms mentioned earlier---both of them,
to validate the results---to generate all 3-edge-colorings of those graphs.

An implementation of Algorithm~\ref{algo:kempe_equiv} can now be used to
determine the number of edge-Kempe equivalence classes of all connected cubic graphs 
up to what is computationally feasible. The numerical results of our 
computations up to order 30 are shown in Table~\ref{tab:kempe}.
For each order $n$, the number of graphs $N$, the number of graphs
with zero edge-Kempe equivalence classes $N_0$ (that is, graphs that are 
not 3-edge-colorable), the number of graphs
with exactly one edge-Kempe equivalence class $N_1$, the number of graphs with the 
maximum number of edge-Kempe equivalence classes $N_{\mbox{max}}$, and a list of the
number of edge-Kempe equivalence classes which occur are given.

In Tables~\ref{tab:bikempe}, \ref{tab:planar_kempe}, and~\ref{tab:3conn_planar_kempe} the results of the same computations are shown for connected bipartite cubic graphs, connected planar cubic graphs, and 3-connected planar cubic graphs, respectively. (We used \textit{plantri}~\cite{BM} to generate the 3-connected planar cubic graphs.)

\begin{table}[htb!]
\caption{Edge-Kempe equivalence classes for connected cubic graphs}
\label{tab:kempe}
\begin{center}
\small
\setlength{\tabcolsep}{4pt} 
\begin{tabular}{rrrrrl}\hline
$n$ & $N$ & $N_0$ & $N_1$ & $N_{\mbox{max}}$ & \# equiv.\ classes\\\hline
4 & 1 & 0 & 1 & 1 & 1 \\
6 & 2 & 0 & 1 & 1 & 1,2 \\
8 & 5 & 0 & 4 & 1 & 1,2 \\
10 & 19 & 2 & 9 & 1 & 0--2,4 \\
12 & 85 & 5 & 44 & 4 & 0--4 \\
14 & 509 & 34 & 188 & 3 & 0--5,8 \\
16 & 4 060 & 212 & 1 258 & 15 & 0--6,8 \\
18 & 41 301 & 1 614 & 8 917 & 7 & 0--10,16 \\
20 & 510 489 & 14 059 & 75 630 & 81 & 0--12,16 \\
22 & 7 319 447 & 144 712 & 680 055 & 25 & 0--18,20,32 \\
24 & 117 940 535 & 1 726 497 & 6 496 848 & 469 & 0--22,24,32 \\
26 & 2 094 480 864 & 23 550 891 & 63 963 867 & 111 & 0--36,40,64 \\
28 & 40 497 138 011 & 361 098 825 & 644 968 468 & 3 132 & 0--38,40,42--44,48,64 \\
30 & 845 480 228 069 & 6 137 247 735 & 6 606 598 953 & 588 & 0--60,62,64--66,68--70,\\
 & & & & & 72,73,80,128 \\
\hline
\end{tabular}
\end{center}
\end{table}

\begin{table}[htb!]
\caption{Edge-Kempe equivalence classes for connected bipartite cubic graphs}
\label{tab:bikempe}
\begin{center}
\small
\setlength{\tabcolsep}{4pt} 
\begin{tabular}{rrrrl}\hline
$n$ & $N$ & $N_1$ & $N_{\mbox{max}}$ & \# equiv.\ classes\\\hline
6  & 1  & 0  & 1  & 2 \\
8  & 1  & 1  & 1  & 1 \\
10  & 2  & 0  & 1  & 2,4 \\
12  & 5  & 2  & 1  & 1,2,4 \\
14  & 13  & 1  & 3  & 1--5,8 \\
16  & 38  & 6  & 2  & 1--4,8 \\
18  & 149  & 4  & 7  & 1--10,16 \\
20  & 703  & 24  & 13  & 1--8,10,12,16 \\
22  & 4 132  & 28  & 25  & 1--18,20,32 \\
24  & 29 579  & 140  & 67  & 1--16,18,20,24,32 \\
26  & 245 627  & 244  & 111  & 1--36,40,64 \\
28  & 2 291 589  & 1 026  & 453  & 1--30,32,34,36,40,48,64 \\
30 & 23 466 857 & 2 588 & 588 & 1--58,60,62,64--66,68--70,72,73,80,128 \\
32 & 259 974 248 & 10 066 & 3 112 & 1--52,54--56,58,60,62,64,66,68,70,72,80,96,128 \\
34 & 3 087 698 618 & 30 848 & 3 469 & 1--106,108--110,112--114,116,120,124,128--130,\\
& & & & 132,136--138,140,144--146,160,256 \\
36 & 39 075 020 582 & 117 304 & 22 832 & 1--104,106,108,110--112,114,116,118,120,124,\\
& & & & 128,130,132,136,138,140,144,146,160,192,256 \\
\hline
\end{tabular}
\end{center}
\end{table}

\begin{table}[htb!]
\caption{Edge-Kempe equivalence classes for connected planar cubic graphs}
\label{tab:planar_kempe}
\begin{center}
\begin{tabular}{rrrrrl}\hline
$n$ & $N$ & $N_0$ & $N_1$ & $N_{\mbox{max}}$ & \# equiv.\ classes\\\hline
4 & 1 & 0 & 1 & 1 & 1 \\
6 & 1 & 0 & 1 & 1 & 1 \\
8 & 3 & 0 & 1 & 1 & 1 \\
10 & 9 & 1 & 8 & 8 & 0,1 \\
12 & 32 & 3 & 28 & 1 & 0--2 \\
14 & 133 & 19 & 111 & 3 & 0--2 \\
16 & 681 & 98 & 556 & 27 & 0--2 \\
18 & 3 893 & 583 & 3 108 & 1 & 0--3 \\
20 & 24 809 & 3 641 & 19 368 & 1 & 0--4,10 \\
22 & 169 206 & 24 584 & 128 811 & 1 & 0--4,10 \\
24 & 1 214 462 & 174 967 & 897 475 & 7 & 0--5,7,10 \\
26 & 9 034 509 & 1 302 969 & 6 457 338 & 42 & 0--7,10 \\
28 & 69 093 299 & 10 038 834 & 47 603 292 & 1 & 0--8,10,11 \\
30 & 539 991 437 & 79 459 168 & 357 637 537 & 2 & 0--14,20 \\
\hline
\end{tabular}
\end{center}
\end{table}

\begin{table}[htb!]
\caption{Edge-Kempe equivalence classes for 3-connected planar cubic graphs}
\label{tab:3conn_planar_kempe}
\begin{center}
\small
\begin{tabular}{rrrrl}\hline
$n$ & $N$ & $N_1$ & $N_{\mbox{max}}$ & \# equiv.\ classes\\\hline
4  & 1  & 1  & 1  & 1 \\
6  & 1  & 1  & 1  & 1 \\
8  & 2  & 1  & 1  & 1 \\
10  & 5  & 1  & 1  & 1 \\
12  & 14  & 13  & 1  & 1,2 \\
14  & 50  & 47  & 3  & 1,2 \\
16  & 233  & 210  & 23  & 1,2 \\
18  & 1 249  & 1 096  & 1  & 1--3 \\
20  & 7 595  & 6 373  & 1  & 1--4,10 \\
22  & 49 566  & 39 860  & 1  & 1--4,10 \\
24  & 339 722  & 260 293  & 6  & 1--5,7,10 \\
26  & 2 406 841  & 1 753 836  & 31  & 1--7,10 \\
28  & 17 490 241  & 12 087 721  & 1  & 1--8,10,11 \\
30  & 129 664 753  & 84 809 873  & 2  & 1--14,20 \\
32 & 977 526 957 & 603 748 613 & 55 & 1--15,17,20 \\
34 & 7 475 907 149 & 4 350 914 098 & 1 & 1--20,23 \\
36 & 57 896 349 553 & 31 685 445 136 & 1 & 1--24,29,30,45 \\
38 & 453 382 272 049 & 232 863 258 652 & 1 & 1--26,28--30,32,35,40,45,100 \\
\hline
\end{tabular}
\end{center}
\end{table}

The graphs from Tables~\ref{tab:kempe} with the maximum number of edge-Kempe equivalence classes can be downloaded from~\cite{GO} and from the database of interesting graphs from the \textit{House of Graphs}~\cite{BCGM} by searching for the keywords ``maximum * edge-Kempe''.

In~\cite{BH1} a question is raised about the possible numbers of 
edge-Kempe equivalence classes. Our computational results show that
all positive integers up to 106 are covered. Hence we make the following
conjecture, which could also be made for more specific classes of
graphs.

\begin{conjecture}
For any positive integer $m$, there is a bipartite cubic graph
with exactly $m$ edge-Kempe equivalence classes.
\end{conjecture}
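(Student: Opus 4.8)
The plan is to establish the conjecture constructively, by exhibiting for every positive integer $m$ an explicit bipartite cubic graph whose \emph{switching graph} (vertices the $3$-edge-colorings, edges the edge-Kempe switches) has exactly $m$ connected components. The toolkit needed is already assembled in this section: Theorem~\ref{thm:planar} and Lemma~\ref{lem:ladder} are \emph{collapse} mechanisms that force large families of colorings into a single edge-Kempe class, while the families with $2^n$ equivalence classes announced above provide a \emph{rigidity} mechanism that locks colorings into separate classes. The idea is to interpolate between these two behaviours so that the component count can be tuned to any prescribed value.

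I would first fix the ladder-with-pendants module $LP_n$ as the basic building block, using its end and side pairs as ports. By Lemma~\ref{lem:types} a type-D module behaves, for coloring purposes, like a pair of through-edges, and by Lemma~\ref{lem:ladder} a type-S module collapses internally to one class; these are exactly the properties the eggbeater proof exploits to show that $E_{i,j,k}$ has a single class. To obtain more than one class I would glue such modules around a core in a way that \emph{obstructs} the type-S$\leftrightarrow$type-D transition that made the eggbeater connected---the critical step in that proof being the existence of an edge-Kempe chain through $v_a$ and $v_c$. Blocking (some of) these transitions should leave a prescribed number of mutually unreachable rigid type-patterns, each seeding its own equivalence class, while Theorem~\ref{thm:planar} and Lemma~\ref{lem:ladder} still guarantee that within each pattern the colorings collapse to one class.

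For covering \emph{every} integer I see two complementary mechanisms. If disconnected graphs are admitted, the switching graph of a disjoint union is essentially the product of the switching graphs of its components (modulo the global $S_3$ action on the three unlabelled colors), so the class count is roughly multiplicative; combined with the $2^n$-families and the single-class families this realizes many values, but the genuine difficulty is producing large \emph{primes}, for which a connected graph with exactly that many classes is still required. I would therefore aim instead for an \emph{additive} family: a gadget whose insertion raises the class count by a fixed small amount, yielding graphs that realize a full arithmetic progression of counts; overlapping finitely many such progressions, with the small cases supplied by Tables~\ref{tab:kempe}--\ref{tab:bikempe}, would then cover all sufficiently large $m$ and hence all $m$. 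The cleanest and most ambitious version is an increment-by-one gadget producing a family $G_1,G_2,\dots$ with $G_m$ having exactly $m$ classes.

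The main obstacle will be the \emph{exact} count, and specifically its upper bound. Exhibiting at least $m$ classes is the easier half: it suffices to produce $m$ colorings separated pairwise by a switching invariant, for instance the forced colors that Lemma~\ref{lem:types} pins on the side pairs of a locked module, which no confined switch can alter. Proving that there are \emph{no more} than $m$ classes requires a global reachability argument showing that every remaining coloring switches into one of the $m$ representatives. Because edge-Kempe switches act on entire bichromatic cycles, a switch meant to operate inside one gadget can thread through the core and the other gadgets; the whole argument therefore hinges on confining Kempe chains to prescribed regions---the same phenomenon handled locally in Lemma~\ref{lem:ladder} through the fact that a chain entering a ladder at $v_a$ must leave at $v_b$. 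Making such confinement hold \emph{uniformly} across an infinite family, so that the intended classes neither split nor silently merge, is where the real work lies.
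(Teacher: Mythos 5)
This statement is an open \emph{conjecture} in the paper, not a proved theorem: the authors support it only with computational evidence (every value of $m$ up to $106$ is realized among the graphs in Tables~\ref{tab:kempe} and~\ref{tab:bikempe}), and they offer no construction. Your text is likewise not a proof but a research plan. You correctly identify the available tools---the collapse mechanisms of Lemma~\ref{lem:types}, Lemma~\ref{lem:ladder}, and Theorem~\ref{thm:planar} on one side, and the rigidity of the $2^n$-class families of Theorems~\ref{thm:2mod4} and~\ref{thm:0mod4} on the other---but at no point do you exhibit, for a given $m$, an actual bipartite cubic graph together with a verified count of its edge-Kempe equivalence classes. The ``increment-by-one gadget'' that would settle the conjecture is named but never defined, and you yourself flag the central obstruction (confining edge-Kempe chains so that classes neither split nor merge across gadget boundaries) as ``where the real work lies.'' That is precisely the work a proof would have to do.

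Two further points deserve caution. First, the multiplicativity heuristic for disjoint unions does not directly help: the paper's data and the conjecture are naturally read in the setting of connected graphs (the tables enumerate connected cubic graphs), and even for disjoint unions the interaction of the unlabelled-color convention with the component-wise switching graphs needs a careful argument, not the phrase ``essentially the product.'' Second, establishing the \emph{exact} value $m$ requires both a lower bound (an invariant separating $m$ colorings, which you sketch plausibly via the forced side-pair colors of Lemma~\ref{lem:types}) and an upper bound (every coloring reaches one of the $m$ representatives); the latter is a global reachability statement of the same difficulty as the eggbeater theorem, and would have to be proved uniformly for an infinite family. As it stands, your proposal is a reasonable outline of how one might attack the conjecture, but it leaves the statement exactly as open as the paper does.
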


The following two theorems show the existence of an infinite family of graphs 
attaining the maximum number of equivalence classes in Tables~\ref{tab:kempe} 
and~\ref{tab:bikempe}. 

\begin{theorem}\label{thm:2mod4}
There are bipartite cubic graphs of order $4n+2$, $n \geq 1$, with 
$2^n$ edge-Kempe equivalence classes.
\end{theorem}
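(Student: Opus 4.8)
The plan is to exhibit an explicit family $G_n$ of bipartite cubic graphs of order $4n+2$ together with a $\mathbb Z_2^{\,n}$-valued quantity that is constant on edge-Kempe equivalence classes, takes all $2^n$ values, and whose level sets are each a single class. I would build $G_n$ as a cyclic chain (a ``necklace'') of $n$ bipartite gadgets, each gadget being a short ladder-with-pendants block of the type analysed in Lemma~\ref{lem:types} and Lemma~\ref{lem:ladder}, glued to its two neighbours along end pairs so that the result is cubic, bipartite, and of order $4n+2$; for $n=1$ this should degenerate to $K_{3,3}$, which indeed has the two classes recorded for order $6$ in Table~\ref{tab:bikempe}. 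Bipartiteness and the order count would be immediate from the construction, and nonplanarity can be certified, if one wishes, exactly as in the eggbeater argument by exhibiting a subgraph homeomorphic to $K_{3,3}$.

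The heart of the argument is the invariant. To each $3$-edge-coloring I would assign, at each of the $n$ gadgets, the binary \emph{type} (D or S) that Lemma~\ref{lem:types} attaches to the pair of edges crossing that gadget, and collect these into a vector in $\mathbb Z_2^{\,n}$. First I would check that all $2^n$ vectors are realized, by writing down one explicit coloring for each pattern of types; this is the routine direction. The two substantive steps are then: (i) that the type vector is unchanged by every edge-Kempe switch, which yields at least $2^n$ classes; and (ii) that any two colorings sharing the same type vector are edge-Kempe equivalent, which yields at most $2^n$. For step (ii) I would use Lemma~\ref{lem:ladder} to absorb all intra-gadget freedom inside the type-S gadgets, and a planar-reduction argument in the spirit of Theorem~\ref{thm:planar}, much as in the eggbeater proof, to connect the remaining colorings.

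The main obstacle is step (i): an edge-Kempe switch acts on a whole cycle of a $2$-factor, and in a cubic graph such a cycle can thread through every gadget at once, so I must rule out the possibility that a single long two-colored cycle simultaneously flips a local type. I expect to handle this by controlling, at each gluing, which colors may appear on the two connecting edges---precisely the bookkeeping furnished by Lemma~\ref{lem:types}---and by using the fact exploited for Algorithm~\ref{algo:kempe_equiv} that switching a Hamiltonian cycle, or switching all cycles of a $2$-factor at once, changes nothing; together these should force any switch entering a gadget in one color pattern to leave it in a compatible one, so that the type is preserved. Pinning down the precise gadget and verifying that the $n$ types are genuinely independent and switch-stable is where the real work lies, the realizability in step~(i) and the within-class connectivity in step~(ii) being comparatively mechanical once Lemma~\ref{lem:ladder} and Theorem~\ref{thm:planar} are in hand.
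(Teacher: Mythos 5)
Your proposal is a strategy outline rather than a proof, and the two steps you yourself flag as ``where the real work lies'' are exactly the ones that cannot be waved through. First, the graph is never pinned down: a necklace of $n$ ladder-with-pendant gadgets glued along end pairs does not obviously have order $4n+2$ (gluing $n$ copies of $LP_1$ pairwise along pendants gives $4n$ vertices, and $LP_i$ with $i$ even gives more), so even the ``immediate'' order count requires a gadget you have not exhibited. Second, and more seriously, the claim that the D/S type vector of Lemma~\ref{lem:types} is constant on edge-Kempe classes is false for the glued ladder constructions actually appearing in this paper: the last paragraph of the eggbeater proof explicitly performs an edge-Kempe switch that converts two type-S ladder subgraphs into type-D ones. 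So your invariant, as stated, is not an invariant; making it one would require designing the gluing so that no two-colored cycle can thread a gadget in a type-changing way, and you give no construction or argument that achieves this. Until the gadget is specified and step (i) is proved for it, the lower bound of $2^n$ classes --- the entire content of the theorem --- is unsupported.

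It is worth contrasting this with the route the paper takes, which sidesteps the invariant problem entirely. The paper exhibits a concrete bipartite graph on $4n+2$ vertices (two special vertices $v_a,v_b$ plus a chain of $n$ copies of $K_{2,2}$), counts its $3$-edge-colorings exactly --- there are $2^n$, by an induction on the $K_{2,2}$ blocks --- and then shows that \emph{every} $2$-factor imposed by any of these colorings is a single Hamiltonian cycle. Switching a Hamiltonian cycle merely transposes two unlabeled colors, so every edge-Kempe class is a singleton and the count of classes equals the count of colorings. In other words, the paper proves that no switch does anything, rather than that switches preserve a $\mathbb{Z}_2^n$-valued label; if you want to salvage your approach, the cleanest fix is to adopt that structure, at which point your level sets become singletons and step (ii) evaporates as well.
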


\begin{proof}
We shall show that there are graphs of order $4n+2$ with
exactly $2^n$ \mbox{3-edge-colorings} and for which all three 2-factors imposed by a
3-edge-coloring consist of just one cycle (which is a Hamiltonian cycle
in the original graph). This implies that the size of each edge-Kempe 
equivalence class is 1, and the theorem follows.

Let $G = (V,E)$ be the graph
shown in Figure~\ref{fig:2mod4}. Obviously, this graph is bipartite.
First consider the 1-factors that contain
$\{v_a,v_b\}$. The graph induced by $V \setminus \{v_a,v_b\}$ consists of 
$n$ subgraphs $K_{2,2}$ that are connected as shown in the figure.
By induction, we shall now show that this graph has $2^n$ 1-factors. 

If $n=1$, then the 1-factor either contains $\{v_{1,0},v_{2,0}\}$ and 
$\{v_{1,1},v_{2,1}\}$, or $\{v_{1,0},v_{2,1}\}$ and $\{v_{1,1},v_{2,0}\}$, that
is, there are $2 = 2^n$ possibilities. For arbitrary $n$, we have the same two
possibilities for the edges containing $v_{1,0}$ and $v_{1,1}$. The graph 
induced by $V \setminus \{v_a,v_b,v_{1,0},v_{1,1},v_{2,0},v_{2,1}\}$ consists of
$n-1$ subgraphs $K_{2,2}$ with the given structure. Hence the total number
of 1-factors is $2 \cdot 2^{n-1} = 2^n$.

Consider a 1-factor $F$ that contains $\{v_a,v_b\}$. To show that
the 2-factor $E \setminus F$ consists of exactly one cycle, we show
that it has a path from $v_a$ to $v_b$. Indeed, a vertex $v_{i,j}$ with
$i$ odd is an endpoint of $\{v_{i,j},v_{i+1,k}\} \in F$ and the neighbors in 
$E \setminus F$ are $v_{i-1,j}$ ($v_a$ if $i=1$) and $v_{i+1,1-k}$
(and similarly for the case of $i$ even).

Finally, a 2-factor containing $\{v_a,v_b\}$ must form a path $P$ from $v_a$ to
$v_b$ after removing $\{v_a,v_b\}$. Hence, for $1 \leq i \leq n$, $P$ must 
contain at least one vertex of type $v_{i,j}$. Let $i'$ be the smallest value of
$i$ for which $P$ contains exactly one vertex of type $v_{i,j}$. But a vertex
$v_{i',j'}$ cannot be in a 2-factor different from $P$ since at most one of its
neighbors in $G$ is not in $P$. This completes the proof.
\end{proof}

\begin{figure}[htb!]
	\centering
	\includegraphics[width=0.9\textwidth]{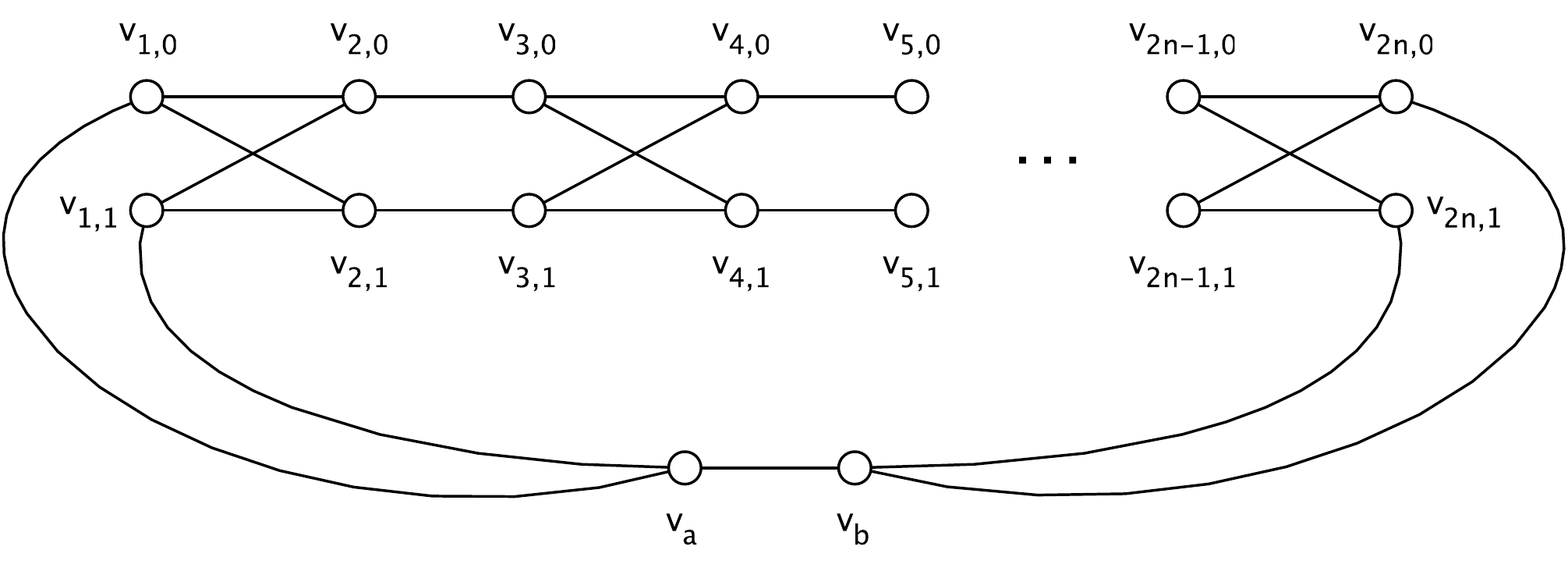}
	\caption{Graph for the proof of Theorem~\ref{thm:2mod4}}
	\label{fig:2mod4}
\end{figure}

The graphs for the proof of Theorem~\ref{thm:2mod4} are closely
related to those for the proof of Theorem~\ref{thm:0mod4}, so
large parts of the proofs are analogous.

\begin{theorem}\label{thm:0mod4}
There are bipartite cubic graphs of order $4n+4$, $n \geq 1$, with 
$2^n$ edge-Kempe equivalence classes.
\end{theorem}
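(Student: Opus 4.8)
The plan is to mirror the proof of Theorem~\ref{thm:2mod4} as closely as possible, changing only the terminal structure so as to absorb the two extra vertices while keeping the graph bipartite and cubic. I would take $G=(V,E)$ to consist of a chain of $n$ copies of $K_{2,2}$, connected exactly as in Figure~\ref{fig:2mod4}, but with the single anchor edge $\{v_a,v_b\}$ replaced by a four-vertex cap on $v_a,v_b,v_c,v_d$ chosen so that the whole graph remains $3$-regular and bipartite. As before, the entire argument reduces to two claims: (a) $G$ has exactly $2^n$ $3$-edge-colorings, and (b) every one of the three $2$-factors imposed by a $3$-edge-coloring is a single (Hamiltonian) cycle. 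Granting (a) and (b), each edge-Kempe chain is a whole Hamiltonian cycle, so switching it merely interchanges two unlabeled colors and returns the same coloring; hence every edge-Kempe equivalence class is a singleton, and the number of classes equals the number of colorings, namely $2^n$.

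For claim (a) I would repeat the inductive count from the proof of Theorem~\ref{thm:2mod4}. Fixing the colors on the cap forces, by the same parity reasoning as in Lemma~\ref{lem:types}, a canonical $1$-factor through $v_a,v_b,v_c,v_d$; after deleting the capped vertices the remaining graph is again a chain of $n$ blocks $K_{2,2}$, each of which independently contributes a factor of $2$, and the recursion $2\cdot 2^{n-1}=2^n$ carries over unchanged. The only genuinely new point here is a finite local check that the cap neither creates nor destroys colorings.

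The main obstacle, and the only step that really differs from Theorem~\ref{thm:2mod4}, is claim (b): for a $1$-factor $F$ containing the canonical cap edges, I must show that the complementary $2$-factor $E\setminus F$ is a single cycle. In the $4n+2$ case this followed from exhibiting in $E\setminus F$ a path traversing every block, reinforced by the ``smallest block containing a unique path-vertex'' argument that forbids a second cycle. I would re-run this neighbor-chasing, using the same identities for the interior vertices $v_{i,j}$, but now the two endpoints of the spanning path must be supplied correctly by the four-vertex cap; the delicate part is verifying that the non-$F$ edges at $v_a,v_b,v_c,v_d$ thread the complementary path through all four cap vertices without pinching off a short cycle. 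The cap should be designed precisely to guarantee this. Once (a) and (b) hold, bipartiteness and cubicity being immediate from the construction, the theorem follows exactly as in the closing lines of the proof of Theorem~\ref{thm:2mod4}.
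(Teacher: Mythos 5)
Your proposal is a plan rather than a proof: the entire burden of Theorem~\ref{thm:0mod4} is the exhibition of a concrete graph, and the one genuinely new object in your argument---the ``four-vertex cap'' on $v_a,v_b,v_c,v_d$---is never specified. You repeatedly defer to it (``chosen so that the whole graph remains $3$-regular and bipartite'', ``the cap should be designed precisely to guarantee this'') without verifying that any cap with the required properties exists. Worse, the properties you demand of it are in tension with each other. If the cap absorbs the four free half-edges of the chain with each cap vertex meeting exactly one chain vertex, then cubicity forces the cap's internal graph to be $2$-regular on four vertices, i.e.\ a $4$-cycle; but a $4$-cycle whose edges receive colors $ABAB$ becomes a \emph{four-cycle component} of the $2$-factor $A\cup B$, destroying your claim (b) that all three $2$-factors are Hamiltonian. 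So you would additionally have to prove that this cap coloring never extends to the whole graph (or design a more elaborate cap), and none of this finite ``local check'' is carried out. Since claims (a) and (b) are exactly what make the class count come out to $2^n$, the proof has a hole precisely where the theorem lives.

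It is also worth noting that the paper does \emph{not} follow the route you aim for. Its graph (Figure~\ref{fig:0mod4}) uses anchor vertices $v_a,v_b,v_c,v_d$ joined by the two edges $\{v_a,v_c\}$ and $\{v_b,v_d\}$, a chain of $n-1$ blocks between $v_a$ and $v_b$, and a separate block on $w_{1,0},w_{1,1},w_{2,0},w_{2,1}$ between $v_c$ and $v_d$. There the $1$-factor containing $\{v_a,v_c\}$ has a complementary $2$-factor with exactly \emph{two} cycles (not one), which yields $2^{n+1}$ $3$-edge-colorings falling into equivalence classes of size $2$, whence $2^{n+1}/2=2^n$ classes; the other two $2$-factors are shown to be Hamiltonian so that no further switches arise. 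In other words, the mechanism producing the count is ``twice as many colorings, classes of size two'', not ``$2^n$ colorings, singleton classes''. Your high-level reduction (Hamiltonian edge-Kempe chains give nothing new because colors are unlabeled) is correct and is used in the paper too, but without an explicit cap and a verification of (a) and (b)---or a switch to the paper's two-cycle mechanism---the argument does not establish the theorem.
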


\begin{proof}
We shall show that there are graphs of order $4n+4$ with
exactly $2^{n+1}$ \mbox{3-edge-colorings}, such that each
edge-Kempe equivalence class has size 2.

For $n \geq 2$, let $G = (V,E)$ be the bipartite graph
shown in Figure~\ref{fig:0mod4}, and 
consider the 1-factors that contain
$\{v_a,v_c\}$. Such a 1-factor contains either
$\{w_{1,0},w_{2,0}\}$ and 
$\{w_{1,1},w_{2,1}\}$, or $\{w_{1,0},w_{2,1}\}$ and $\{w_{1,1},w_{2,0}\}$
(2 possibilities), and it contains $\{v_b,v_d\}$. Finally, for the
edges incident to the vertices in the set
$V' = \{v_{1,0},v_{1,1},v_{2,0},v_{2,1},\ldots,v_{2n-2,0},v_{2n-2,1}\}$, there
are $2^{n-1}$ possibilities
by the argument in the proof of Theorem~\ref{thm:2mod4}.
This gives a total of $2 \cdot 2^{n-1} = 2^{n}$ possible 1-factors.

With arguments analogous to those in the
proof of Theorem~\ref{thm:2mod4}, for a 1-factor $F$ that contains 
$\{v_a,v_c\}$, the 2-factor $E \setminus F$ consists of exactly two 
cycles, one with vertex set $\{v_a,v_b\} \cup V'$ and the other
with vertex set $\{v_c,v_d,w_{1,0},w_{1,1},w_{2,0},w_{2,1}\}$. Hence 
there are 2 possibilities for the set of two 1-factors that impose
such a 2-factor, and the
total number of 3-edge-colorings is $2^n \cdot 2 = 2^{n+1}$.

As in the proof of Theorem~\ref{thm:2mod4} we may conclude that 
a 2-factor containing $\{v_a,v_c\}$ must, after removing 
$\{v_a,v_c\}$ and $\{v_b,v_d\}$, have a path from $v_a$ to
$v_b$ containing all vertices in $V'$ and another path from $v_c$ to $v_d$  
containing all vertices in $\{w_{1,0},w_{1,1},w_{2,0},w_{2,1}\}$, that
is, it must be Hamiltonian. Consequently, switching gives nothing
for the union of the 1-factor containing $\{v_a,v_c\}$ and any of the 
two other 1-factors. Switching with respect to the two other 1-factors
leads to an edge-Kempe equivalence class of size 2, so the total number
of edge-Kempe equivalence classes is $2^{n+1}/2 = 2^n$. For $n=1$,
the cubical graph of order 8 has 2 edge-Kempe equivalence classes,
which completes the proof.
\end{proof}

\begin{figure}[htb!]
	\centering
	\includegraphics[width=0.9\textwidth]{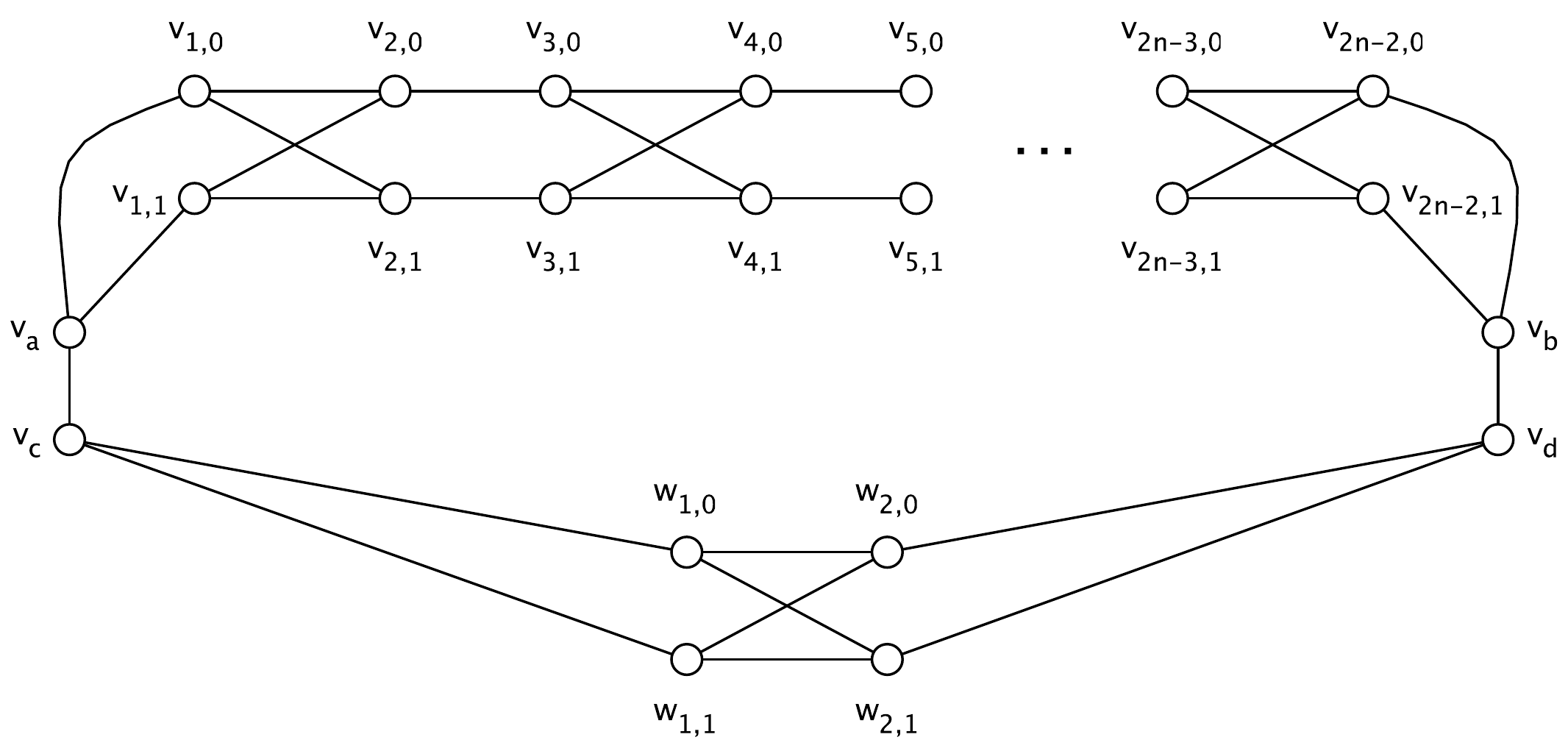}
	\caption{Graph for the proof of Theorem~\ref{thm:0mod4}}
	\label{fig:0mod4}
\end{figure}

We make the following conjecture.

\begin{conjecture}
There are no cubic graphs with more edge-Kempe equivalence classes than 
those in Theorems\/ $\ref{thm:2mod4}$ and\/ $\ref{thm:0mod4}$.
\end{conjecture}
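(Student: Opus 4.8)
The plan is first to turn the conjecture into a single explicit bound. Writing the two extremal families uniformly, a graph from Theorem~\ref{thm:2mod4} of order $N=4n+2$ and one from Theorem~\ref{thm:0mod4} of order $N=4n+4$ both realize $2^{\lfloor (N-2)/4\rfloor}$ edge-Kempe equivalence classes, so the conjecture is equivalent to the statement that every cubic graph of order $N$ has at most $2^{\lfloor (N-2)/4\rfloor}$ such classes. Since floor is superadditive, $\lfloor (N_1-2)/4\rfloor+\lfloor (N_2-2)/4\rfloor\leq \lfloor (N_1+N_2-2)/4\rfloor$, the per-component freedom of a disjoint union cannot beat a connected graph of the same order, so I would reduce at once to connected $G$. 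Throughout I would use that the number of classes equals $\sum_{X}1/|[X]|$, the sum over all 3-edge-colorings $X$ of the reciprocal of the size of its class $[X]$.

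The two families already display the only two pure regimes and show why counting colorings alone cannot suffice: the order-$(4n+2)$ graphs have exactly $2^n$ colorings, each a singleton class because all three 2-factors are Hamiltonian (switching a Hamiltonian cycle yields nothing, as in the proof of Theorem~\ref{thm:unique}), whereas the order-$(4n+4)$ graphs have $2^{n+1}$ colorings grouped into classes of size $2$. Hence any proof must control class sizes and colorings simultaneously. The target exponent $\lfloor(N-2)/4\rfloor$ suggests the right bookkeeping: each independent binary degree of freedom separating classes should cost at least four vertices (one $K_{2,2}$ block in the extremal chains), while the ``$-2$'' records the two distinguished boundary vertices $v_a,v_b$ that carry no freedom. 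The goal would be to make this charging rigorous: exhibit, for every $G$, a family of at most $\lfloor(N-2)/4\rfloor$ binary coordinates whose values are constant on each edge-Kempe class, so that the classes inject into $\{0,1\}^{\lfloor(N-2)/4\rfloor}$.

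I would attack the bound along two complementary lines. \textbf{(A)} \emph{Reduction by small edge cuts.} The extremal graphs are chains of $K_{2,2}$ blocks joined through $2$-edge-cuts. If $G$ has a nontrivial $2$- or $3$-edge-cut splitting it into parts of orders $N_1$ and $N_2$, I would bound the number of classes of $G$ by a controlled function of invariants of the two sides, arranged so that the exponents add as $\approx (N_1-2)/4+(N_2-2)/4$ and are therefore maximized precisely by the $K_{2,2}$-chain. Iterating peels off all small cuts and reduces the problem to cubic graphs with no nontrivial small cut, i.e.\ cyclically $4$-edge-connected graphs. \textbf{(B)} \emph{The cut-free case.} Here I would try to show the count falls well below the bound by proving that high edge-connectivity forces most of the three 2-factors of a typical coloring to split into several cycles; by the $2^{m-1}$ mechanism a 2-factor with $m$ cycles already merges many colorings into one class, so classes become large and their number collapses. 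Planar or bipartite subfamilies can be removed immediately using Theorem~\ref{thm:planar}.

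The main obstacle is the coupling across a cut in \textbf{(A)}: an edge-Kempe chain may cross the cut, so the colorings of the two sides are not independent and the classes do not simply multiply. Making the inductive inequality precise amounts to a gluing (transfer-matrix) analysis that tracks, for each admissible pair of boundary color patterns, which pairs are Kempe-reachable from one another---exactly the kind of finite case analysis carried out for ladders in Lemmas~\ref{lem:types} and~\ref{lem:ladder}, but now at an abstract interface. Even granting \textbf{(A)}, the decisive difficulty is the cyclically $4$-edge-connected case \textbf{(B)}: there is no separator to induct on, and ruling out an anomalous graph with too many classes appears to require a genuinely global input---an isoperimetric or spectral estimate guaranteeing that such graphs admit short Kempe chains and hence large equivalence classes. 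I expect this last step to be where the real work lies, and it is plausibly the reason the statement is posed as a conjecture rather than a theorem.
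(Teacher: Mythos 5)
The statement you are asked to prove is stated in the paper as a \emph{conjecture}, and the paper offers no proof of it: the only support given is exhaustive computation (all connected cubic graphs up to order 30 in Table~\ref{tab:kempe}, bipartite ones up to order 36 in Table~\ref{tab:bikempe}), together with the heuristic remark that the conjectured maxima for 3-edge-colorings, Hamiltonian cycles, and edge-Kempe classes are $O(2^{n/2})$, $O(2^{n/3})$, and $O(2^{n/4})$ respectively, all attained on similar circular chains of small blocks. Your reformulation of the conjecture as the bound $2^{\lfloor (N-2)/4\rfloor}$ on the number of classes for a cubic graph of order $N$ is consistent with both extremal families and with the tabulated maxima (e.g.\ $128=2^7$ at $N=30$, $256=2^8$ at $N=34$), and your description of the two families' behaviour ($2^n$ singleton classes versus $2^{n+1}$ colorings in classes of size $2$) matches the proofs of Theorems~\ref{thm:2mod4} and~\ref{thm:0mod4}. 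So as a framing of what would need to be shown, the proposal is sound.

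It is not, however, a proof, and you acknowledge as much. Both pillars of the plan are missing: in \textbf{(A)} the gluing inequality across a $2$- or $3$-edge-cut is never formulated, let alone proved, and the difficulty you name (Kempe chains crossing the cut, so that classes on the two sides do not factor) is precisely the obstruction that would have to be overcome; in \textbf{(B)} there is no argument at all, only the hope that cyclic $4$-edge-connectivity forces $2$-factors with many cycles and hence large classes --- note that high connectivity alone cannot suffice, since the graphs of Theorem~\ref{thm:2mod4} themselves have every $2$-factor Hamiltonian. There are also smaller unexamined points (the disjoint-union reduction needs care because colors are unlabeled, so colorings of a union are not simply products of colorings of the components). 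None of this is a criticism of the strategy --- it is a reasonable research programme --- but the statement remains open in the paper and remains open after your proposal.
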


Notice that there are also other graphs than those in
Theorems~\ref{thm:2mod4} and~\ref{thm:0mod4} with the same extremal
property.

The graphs in Figures~\ref{fig:2mod4} and~\ref{fig:0mod4} can be
obtained from twisted prism graphs (Figure~\ref{fig:prisms})
by a homomorphic mapping of, respectively, one and two $K_{2,2}$ subgraphs
to $K_{1,1}$ subgraphs. Indeed, the graphs have a 
similar circular structure of basic subgraphs as conjectured extremal graphs
for the maximum number of \mbox{3-edge-colorings}~\cite{BH3} 
(prism graphs and M\"obius ladders -- see Figure~\ref{fig:prisms})
and Hamiltonian cycles~\cite{E} (the family displayed in Figure~\ref{fig:eppstein_hc}). 

The conjectured maximum number of structures is  
$O(2^{n/2})$, $O(2^{n/3})$, and 
$O(2^{n/4})$
for 3-edge-colorings, Hamiltonian cycles, and edge-Kempe equivalence
classes, respectively.

\begin{figure}[htb!]
	\centering
	\includegraphics[width=0.5\textwidth]{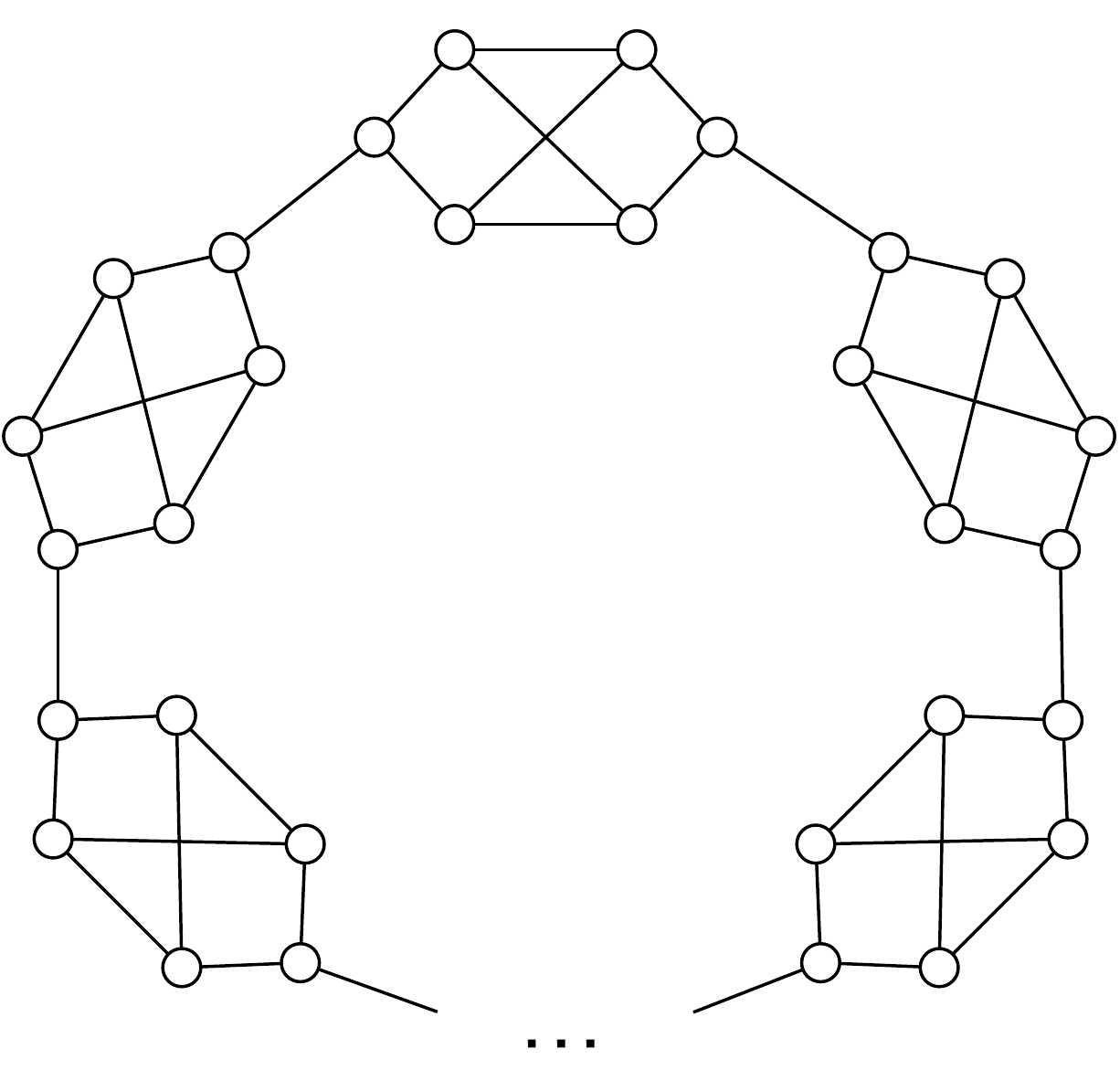}
	\caption{Graphs with many Hamiltonian cycles}
	\label{fig:eppstein_hc}
\end{figure}

In our study, we have obtained further results for graphs with many
\mbox{3-edge-colorings.} Bessy and Havet~\cite{BH3} show that a 
connected cubic graph of order $n$ can have at most $3\cdot 2^{n/2-3}$  3-edge-colorings,
and in \mbox{\cite[Conjecture 13]{BH3}} they state the following conjecture.

\begin{conjecture}
\label{conj:bessy_havet} 
The connected cubic graph of order $n$ with the largest number 
of\/ $3$-edge-colorings 
has $(2^{n/2-1} + 4)/3$ $3$-edge-colorings if $n/2$ is even and $(2^{n/2-1} + 2)/3$ 
$3$-edge-colorings if $n/2$ is odd. 
\end{conjecture}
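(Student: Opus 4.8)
Because this is stated as a conjecture, the plan is to turn it into a theorem by separating the two halves of an extremal problem: a constructive lower bound exhibiting graphs that attain the stated count, and a matching upper bound showing no connected cubic graph does better. For the lower bound I would work with the two candidate families already flagged in the text, the prism graphs and the M\"obius ladders of Figure~\ref{fig:prisms}, and compute their numbers of $3$-edge-colorings exactly by a transfer matrix along the rungs. Writing $m=n/2$ and indexing rungs cyclically, take as state the ordered pair $(t,b)$ of colors of the top and bottom cycle-edges entering a rung; properness at the two ends of the rung forces the outgoing pair and fixes how many rung-colors are admissible. One checks that from a matched state $(c,c)$ there are exactly two continuations, to each $(c',c')$ with $c'\neq c$, and from an unmatched state $(t,b)$ with $t\neq b$ exactly one, to $(b,t)$. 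The resulting $9\times 9$ matrix $T$ is block diagonal: a $3\times 3$ block equal to the adjacency matrix of $K_3$ (eigenvalues $2,-1,-1$) and a $6\times 6$ block equal to the fixed-point-free involution $(t,b)\mapsto(b,t)$ (eigenvalues $1,1,1,-1,-1,-1$).

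The number of labeled colorings of the prism is then $\mathrm{tr}(T^m)=2^m+5(-1)^m+3$, and of the M\"obius ladder it is $\mathrm{tr}(T^m\sigma)$, where $\sigma$ is the twist swapping top and bottom; since $\sigma$ restricts to the same involution on the off-diagonal block this equals $2^m+2(-1)^m+6$ for odd $m$ and $2^m+2(-1)^m$ for even $m$. Dividing by $3!=6$ to pass to the unlabeled count used in this paper, the prism gives $(2^{m-1}+4)/3$ for even $m$ and the M\"obius ladder gives $(2^{m-1}+2)/3$ for odd $m$; in each parity the winning family reproduces exactly the conjectured value (one can sanity-check this against Table~\ref{tab:bikempe}, where $K_{3,3}=M_6$ indeed has two $3$-edge-colorings). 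This settles the lower bound and identifies the conjectured extremizers.

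The whole difficulty lies in the matching upper bound. I would start from the Bessy--Havet bound $3\cdot 2^{n/2-3}$, which already has the correct exponential order $2^{n/2}$, and try to sharpen its constant. The natural route is an inductive reduction built on the description of a $3$-edge-coloring as a perfect matching whose complement is an even $2$-factor (Theorem~\ref{thm:corr}): select a local configuration, say an edge together with its four neighboring edges, and bound the colorings of $G$ by those of one or two smaller cubic graphs obtained by suppressing and merging, producing a recursion whose dominant growth is $2^{n/2}$. The delicate point is not the growth rate but the additive constant and, above all, the \emph{uniqueness of the extremizer}: equality in the recursion must be shown to force the circular-ladder structure, a stability statement asserting that any near-optimal $G$ is assembled entirely from the $K_{2,2}$/rung gadgets of the two families. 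I expect this extremal characterization to be the main obstacle, because the recursion generically carries slack that has to be proved strict away from prisms and M\"obius ladders, and because the twist and the parity-dependent choice of optimal family must be tracked simultaneously---which is presumably why the statement is still a conjecture and not a theorem.
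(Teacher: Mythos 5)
This statement is a conjecture due to Bessy and Havet \cite{BH3}, and the paper does not prove it: its only contribution here is Observation~\ref{obs:bh}, an exhaustive computational verification up to order $30$. So there is no proof in the paper to compare against, and your proposal must be judged on its own terms. The part you actually carry out---the transfer-matrix count of $3$-edge-colorings of prisms and M\"obius ladders---is correct. The transition rules (two continuations from a matched state $(c,c)$, one from an unmatched state $(t,b)$ to $(b,t)$), the block structure of $T$, and its eigenvalues are all right, and the resulting labeled counts $2^m+5(-1)^m+3$ for the prism and $2^m+4$ (odd $m$), $2^m+2$ (even $m$) for the M\"obius ladder divide by $3!$ to give exactly the conjectured values in the winning parity; your $K_{3,3}$ sanity check ($12$ labeled, hence $2$ unlabeled colorings, consistent with Table~\ref{tab:bikempe}) also holds. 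This settles the lower bound and identifies the conjectured extremizers, in line with the paper's remark that prisms and M\"obius ladders are the conjectured extremal graphs for the number of $3$-edge-colorings.

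The lower bound, however, is not the content of the conjecture; the assertion is the upper bound, and there your proposal offers a program rather than a proof. You propose to sharpen the constant in the Bessy--Havet bound $3\cdot 2^{n/2-3}$ via a suppression/merging recursion and then to prove a stability statement forcing the circular-ladder structure at equality, but no recursion is exhibited, no base cases are analyzed, and no mechanism is given for why the slack is strict away from the two families. As you yourself acknowledge, this is exactly where the difficulty lies, which is why the statement remains open both in \cite{BH3} and here. The genuine gap is therefore the entire upper-bound half; short of closing it, the realistic verifiable contribution is the one the authors make, namely extending the exhaustive check of Observation~\ref{obs:bh} or proving the bound for restricted classes of cubic graphs.
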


We have been able to get further evidence for 
Conjecture~\ref{conj:bessy_havet}.

\begin{observation} \label{obs:bh}
Conjecture\/ $\ref{conj:bessy_havet}$ holds at least up to order\/ $30$.
\end{observation}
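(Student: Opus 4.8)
The plan is purely computational, since Observation~\ref{obs:bh} asserts that a finite range of orders has been checked against the closed-form prediction of Conjecture~\ref{conj:bessy_havet}. Write $M(n)$ for the conjectured maximum number of $3$-edge-colorings of a connected cubic graph of order $n$, namely $M(n)=(2^{n/2-1}+4)/3$ when $n/2$ is even and $M(n)=(2^{n/2-1}+2)/3$ when $n/2$ is odd. For each even $n$ with $4\le n\le 30$ I would verify two things: (i) every connected cubic graph of order $n$ has at most $M(n)$ $3$-edge-colorings, and (ii) the value $M(n)$ is attained by some such graph. Together these give that the maximum equals $M(n)$, which is exactly the content of the conjecture restricted to this range.

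The key point is that all of the required enumeration is already carried out in the computation underlying Table~\ref{tab:kempe}: determining the edge-Kempe equivalence classes of a connected cubic graph requires generating and storing all of its $3$-edge-colorings, so the number of $3$-edge-colorings of each graph is available as a by-product. Concretely, I would use \textit{snarkhunter}~\cite{BGM} to generate all connected cubic graphs of each order $n\le 30$, and for each graph count its $3$-edge-colorings using the two independent algorithms described above, the agreement of the two counts serving as a consistency check. Recording the maximum count over all graphs of a fixed order yields the quantity appearing in (i)--(ii), and comparing it with the value $M(n)$ obtained by evaluating the formula completes the verification for that order.

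For the achievability part (ii) it is not necessary to rely on the exhaustive sweep alone: the conjectured extremal graphs are the prism graphs and M\"obius ladders (Figure~\ref{fig:prisms}), and one can check directly that a member of this family of order $n$ attains $M(n)$ colorings, either by a short separate computation or by confirming that its count coincides with the recorded maximum. This also cross-checks the sweep, since the extremal graph located by enumeration should agree, in its coloring count, with the conjectured family.

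The main obstacle is entirely one of scale rather than of mathematical difficulty. By Table~\ref{tab:kempe} there are already more than $8\times 10^{11}$ connected cubic graphs of order $30$, so the generation and the coloring count must be organized to run within feasible time and memory, which is why the work is split across orders and parallelized. A useful worst-case safeguard is early termination: since $M(n)$ lies below the proven upper bound $3\cdot 2^{n/2-3}$ of Bessy and Havet~\cite{BH3}, the coloring routine for a given graph may abort as soon as it has produced more than $M(n)$ colorings, so that any violation of the conjecture would be detected immediately without completing the count. As no such violation occurs for $n\le 30$, every graph is confirmed to have at most $M(n)$ colorings and the observation follows; extending the same computation to higher orders is limited only by the rapidly growing number of cubic graphs.
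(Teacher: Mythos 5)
Your proposal matches the paper's (implicit) approach: Observation~\ref{obs:bh} rests on the same exhaustive computation underlying Table~\ref{tab:kempe}, namely generating all connected cubic graphs up to order 30 with \textit{snarkhunter} and counting all 3-edge-colorings of each with the two independent algorithms, then comparing the resulting maxima with the formula in Conjecture~\ref{conj:bessy_havet}. Your added details (checking attainability via the prism/M\"obius ladder family, early termination against the Bessy--Havet bound) are sensible implementation refinements but do not change the method.
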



\subsection*{Acknowledgements}

We would like to thank Gunnar Brinkmann for useful suggestions. The observation
at the end of Section~\ref{sect:sts} dates back to discussions with Petteri Kaski after the
publication of \cite{KO2}.
Several of the computations for this work were carried out using the supercomputer infrastructure provided by the VSC (Flemish Supercomputer Center), funded by the Research Foundation Flanders (FWO) and the Flemish Government.


\end{document}